\newtheorem{idea}{Idea}
\journal{Applied Numerical Mathematics}
\begin{document}

\begin{frontmatter}

\title{New preconditioner strategy for solving  block four-by-four linear systems: An application to the saddle-point problem from 3D Stokes equation}

\author[1]{A. Badahmane}
\author[1]{A. Ratnani}
\author[2]{H. Sadok}
\address[1]{The UM6P Vanguard Center, Mohammed VI Polytechnic University, Benguerir 43150, Lot 660, Hay Moulay Rachid, Morocco.}
\address[2]{LMPA, Universit\'e du Littoral C\^ote d'Opale, 50 Rue F. Buisson, BP 699 - 62228 Calais cedex, France.}

\newcommand{\Leg}{{\mathcal L(E,G)}}
\newcommand{\Lef}{{\mathcal L(E,F)}}
\newcommand{\Lfg}{{\mathcal L(F,G)}}
\newcommand{\Le}{{\mathcal L(E)}}
\newcommand{\I}{{\mathcal{I}}}
\newcommand{\ia}{{\mathfrak I}}
\newcommand{\vi}{\emptyset}
\newcommand{\di}{\displaystyle}
\newcommand{\Om}{\Omega}
\newcommand{\na}{\nabla}
\newcommand{\wi}{\widetilde}
\newcommand{\al}{\alpha}
\newcommand{\be}{\beta}
\newcommand{\ga}{\gamma}
\newcommand{\Ga}{\Gamma}
\newcommand{\e}{\epsilon}
\newcommand{\la}{\lambda}
\newcommand{\De}{\Delta}
\newcommand{\de}{\delta}
\newcommand{\entraine}{\Longrightarrow}
\newcommand{\inj}{\hookrightarrow}
\newcommand{\recip}{\Longleftarrow}
\newcommand{\ssi}{\Longleftrightarrow}
\newcommand{\K}{\mathbbm{K}}
\newcommand{\A}{\mathcal{A}}
\newcommand{\R}{\mathbb{R}}
\newcommand{\C}{\mathbb{C}}
\newcommand{\N}{\mathbb{N}}
\newcommand{\Q}{\mathbb{Q}}
\newcommand{\1}{\mathbb{1}}
\newcommand{\0}{\mathbb{0}}
\newcommand{\Z}{\mathbbm{Z}}
\newcommand{\E}{\mathbbm{E}}
\newcommand{\F}{\mathbbm{F}}
\newcommand{\B}{\mathbbm{B}}
\newcommand{\M}{\mathcal{M}_{n}(\K)}
\newcommand{\tend}[2]{\displaystyle\mathop{\longrightarrow}_{#1\rightarrow#2}}

\font\bb=msbm10

\def\ent{{{\rm Z}\mkern-5.5mu{\rm Z}}}
\newtheorem{exo}{Exercice}

\newtheorem{pre}{Preuve}
\newtheorem{pro}{Propriété}
\newtheorem{exe}{Example}
\newtheorem{theorem}{Theorem}[section]
\newtheorem{proposition}{Proposition}
\newtheorem{definition}{Definition}[section]
\newtheorem{remark}{Remark}[section]
\newtheorem{lem}{Lemma}[section]
\begin{abstract}
We have presented a fast method for solving a specific type of block four-by-four saddle-point problem arising from the finite element discretization of the generalized 3D Stokes problem. We analyze the eigenvalue distribution and the eigenvectors of the preconditioned matrix.
Furthermore, we suggested utilizing  the preconditioned global conjugate gradient method  ($\mathcal{P}$GCG) as a block iterative solver for handling multiple right-hand sides within the sub-system and  give some new convergence results. Numerical experiments have shown that our preconditioned iterative approach is very efficient  for solving the  3D Stokes problem.
\end{abstract}
\begin{keyword}
3D Stokes equation, saddle-point problem, preconditioner, global Krylov subspace methods.
\end{keyword}
\end{frontmatter}
\textbf{Notations.}
 For a real square  matrix $A$, the set of all eigenvalues (spectrum) of $A$ is denoted by $\sigma(A)$. When the spectrum of $A$ is real, we use $\lambda_{\text{min}}(A)$ and $\lambda_{\text{max}}(A)$ to respectively denote its minimum and maximum eigenvalues. When $A$ is symmetric positive (semi)definite, we write $A \succ 0$ ($A \succeq 0$). In addition, for two given matrices $A$ and $C$, the relation $A \succ C$ ($A \succeq C$) means $A - C \succ 0$ ($A - C \succeq 0$). Lastly, for vectors $\mathrm{u}_1$, $\mathrm{u}_2$, $\mathrm{u}_3$ and  $\mathrm{p}$ of dimensions $n_u$ and $n_p$, $\left(\mathrm{u}_1;\mathrm{u}_2;\mathrm{u}_3;\mathrm{p}\right)$ will denote a column vector of dimension $N=3n_u +n_p$. Throughout the paper, $I$ will denote the identity matrix (the size of which will be clear from the context).
\section{Introduction}\label{sec1}
\subsection{Problem setting and variational formulations}~\label{problem}
Let \(\Omega\) represent a bounded domain in \(\mathbb{R}^3\), with a boundary \(\Gamma\) that is Lipschitz-continuous. Within \(\Omega\), consider the generalized Stokes problem:
\begin{eqnarray}
\label{Stokes}
\left\{
\begin{array}{r c l}
 \alpha \vec{u}-\nu\Delta \vec{u}+\vec{\nabla} p &= \vec{f}\hspace*{0.3cm} \text{in}  \hspace*{0.1cm}
\Omega, \label{Stokes1} \\
\vec{\nabla}\cdot \vec{u}&=0 \hspace*{0.4cm} \text{in}  \hspace*{0.2cm} \Omega, \label{Stokes2}\\
\vec{u}&=\vec{u}_{D} \hspace*{0.2cm} \text{on}  \hspace*{0.1cm}\Gamma.
\end{array}
\right.
\end{eqnarray}
In our research paper, we consider a system of equations involving the velocity vector $\Vec{u}=\left(u_{1};u_{2};u_{3}\right)\in\mathbb{R}^{3}$, $\nu>0$ represents the kinematic viscosity, pressure $p$, external force field $\Vec{f}=(f_{1};f_{2};f_{3})\in\mathbb{R}^{3}$ and  $\alpha$ is a non-negative constant. When $\alpha=0$, the equations reduce to the following  classical Stokes problem :
\begin{eqnarray}
\label{StokesC}
\left\{
\begin{array}{r c l}
-\nu\Delta \vec{u}+\vec{\nabla} p &= \vec{f}\hspace*{0.3cm} \text{in}  \hspace*{0.1cm}
\Omega,  \\
\vec{\nabla}\cdot \vec{u}&=0 \hspace*{0.4cm} \text{in}  \hspace*{0.2cm} \Omega,\\
\vec{u}&=\vec{u}_{D} \hspace*{0.2cm} \text{on}  \hspace*{0.1cm}\Gamma.
\end{array}
\right.
\end{eqnarray}
However for $\alpha>0$,~(\ref{Stokes}) represent the generalized Stokes problem. Before starting the weak formulation of  Eq.~(\ref{Stokes}), we provide some definition and reminders:
the space of functions that are
square-integrable according to  Lebesgue definition is a set of functions  where the integral of the square of the function over a given interval is finite, and also can be expressed as follows :
$$L_{2}(\Omega):=\left\{e:\Omega \rightarrow\mathbb{R} \hspace{0.1cm}
 ;\int_{\Omega}e^{2}<\infty\right\},$$
if we have a subset $\Omega$ of the three-dimensional Euclidean space $\mathbb{R}^3$, then the Sobolev space $\mathcal{H}^{1}(\Omega)$ can be defined as follows:
$$\mathcal{H}^{1}(\Omega)=\left\{e:\Omega \rightarrow  \mathbb{R}\hspace{0.1cm};e, \frac{\partial e}{\partial x},\frac{\partial e}{\partial y},\frac{\partial e}{\partial z} \in L_{2}(\Omega)\right\}.$$ 
We define the velocity solution and test spaces:
$$\mathbb{V}^{D}=\left\{\vec{u}\in \mathcal{H}^{1}(\Omega)^{3}\hspace{0.1cm};  \vec{u}=\vec{u}_{D} \hspace{0.1cm} \text{on} \hspace{0.1cm}\partial\Omega\right\},$$
$$H^{1}_{E_0}:=\left\{\vec{v}\in \mathcal{H}^{1}(\Omega)^{3}\hspace{0.1cm};  \vec{v}=\vec{0} \hspace{0.1cm} \text{on} \hspace{0.1cm}\partial\Omega\right\},$$
$$\mathbb{P}=\left\{p\in L_{2}(\Omega)\hspace{0.1cm}; \int_{\Omega}pdx=0\right\},$$ the variational formulation of  $(\ref{Stokes})$,
find  $\vec{u}\in \mathbb{V}^{D}$ and $p\in \mathbb{P}$ such that :
\begin{eqnarray}
\label{var}
\left\{
\begin{array}{r c l}
\displaystyle{\alpha \int_{\Omega }^{}\vec{u}\cdot\vec{v}+\nu\int_{\Omega }^{}\vec{\nabla} \vec{u}:\vec{\nabla} \vec{v}-\int_{\Omega }^{}p\vec{\nabla}\cdot \vec{v} }&=&\displaystyle{ \int_{\Omega }^{}\vec{f}\cdot  \vec{v}}\hspace{0.2cm}\  \text{for all}\hspace{0.1cm} \vec{v}\in H^{1}_{E_0}, \\
 \displaystyle{\int_{\Omega }^{}q\vec{\nabla}\cdot \vec{u}}&=& 0 \hspace{1.6cm} \text{for all}\hspace{0.1cm} q\in \mathbb{P}.
\end{array}
\right.
\end{eqnarray}
Here  $\cdot$ is the scalar product and  $\vec{\nabla} \vec{u} : \vec{\nabla} \vec{v}$ represents the component-wise scalar product. For instance, in three dimensions, it can be represented as $\vec{\nabla} u_{x}\cdot\vec{\nabla} v_{x}+\vec{\nabla} u_{y}\cdot\vec{\nabla} v_{y}+\vec{\nabla} u_{z}\cdot\vec{\nabla} v_{z}$. 
\subsection{Finite element discretization with $P_{1}$-Bubble/$P_{1}$} \cite{koko}
In this paper,  we used $P_{1}$-bubble/$P_{1}$ to discretize  (\ref{Stokes}). Let $\mathcal{T}_{\mathrm{h}}$ be a triangulation of $\Omega$, $T$ a triangle of $\mathcal{T}_{\mathrm{h}}$ and $\mathcal{C}^0$ represents the space of continuous functions on a set $\Omega$. We define the space associated with the bubble by
\[  \mathbb{B}_{\mathrm{h}} = \{ v_{\mathrm{h}} \in \mathcal{C}^0(\overline{\Omega}) \,|\, \forall T \in \mathcal{T}_{\mathrm{h}}, \, v_{\mathrm{h}}|_T = xb^{(T)} \}. \]

We also define the discrete function spaces:
\[ \mathbb{V}_{i,\mathrm{h}} = \{ v_{i} \in \mathcal{C}^0(\overline{\Omega}) \,;\, v_{i}|_T \in P_1, \, \forall T \in \mathcal{T}_{\mathrm{h}}, \, v_{i}|_\Gamma = 0, \, i = 1, 2, 3 \}, \]
\[ \mathbb{P}_{\mathrm{h}} = \{ q_{\mathrm{h}} \in \mathcal{C}^0(\overline{\Omega}) \,;\, q_{\mathrm{h}}|_T \in P_1, \, \forall T \in \mathcal{T}_{\mathrm{h}}, \, \int_{\Omega} q_{\mathrm{h}} \,dx = 0 \}. \]
We set $\mathbb{X}_{i,{\mathrm{h}}} = \mathbb{V}_{i,{\mathrm{h}}} \oplus \mathbb{B}_{\mathrm{h}}$ and $\mathbb{X}_{\mathrm{h}} = X_{1,{\mathrm{h}}} \times X_{2,{\mathrm{h}}}\times X_{3,{\mathrm{h}}} $. With the above preparations, the discrete variational problem reads as follows : find $(\vec{u}_{\mathrm{h}}, p_{\mathrm{h}}) \in \mathbb{X}_{\mathrm{h}} \times \mathbb{P}_{\mathrm{h}}$ such that
\begin{eqnarray}
 \label{disc_var}
 \left\{
 \begin{array}{r c l}
 \displaystyle{\alpha \int_{\Omega }^{}\vec{u}_{{\mathrm{h}}}\cdot\vec{v}_{{\mathrm{h}}}+\nu\int_{\Omega }^{}\vec{\nabla}\vec{u}_{\mathrm{h}}:\vec{\nabla}\vec{v}_{\mathrm{h}}-\int_{\Omega }^{}p_{\mathrm{h}}\vec{\nabla}\cdot \vec{v}_{\mathrm{h}}} &=& \displaystyle{\int_{\Omega }^{}\vec{f}\cdot  \vec{v}_\mathrm{h}}\hspace{0.2cm}\  \text{for all}\hspace{0.1cm} \vec{v}_{\mathrm{h}}\in \mathbb{X}_{\mathrm{h}}, \\
 \displaystyle{\int_{\Omega }^{}q_{{\mathrm{h}}}\vec{\nabla} \cdot\vec{u}_{{\mathrm{h}}}}&=&0 \hspace{1.6cm}\text{for all}\hspace{0.1cm} {q}_{\mathrm{h}}\in \mathbb{P}_{{\mathrm{h}}}. \\ 
 \end{array}
 \right.
 \end{eqnarray}
For a given triangle $T$, the velocity field $\vec{u}_{\mathrm{h}}$ and the pressure $p_{\mathrm{h}}$ are approximated by linear combinations of the basis functions in the form
\[ \vec{u}_{{\mathrm{h}}}(x) = \sum_{i=1}^{4}\textbf{u}_i \phi_i(x)  + \textbf{u}_b \phi_b(x), \]
\[ p_{\mathrm{h}}(x) = \sum_{i=1}^{4} \textbf{p}_i\phi_i(x) ,\]
where $\textbf{u}_i$ and $\textbf{p}_i$ are nodal values of $\vec{u}_{\mathrm{h}}$ and $p_{h}$, while $\textbf{u}_b$ is the bubble value.  A useful property for the basis functions is
\[\phi_1(x) = 1-x-y,\hspace{0.2cm} \phi_2(x) = x, \hspace{0.2cm}\phi_3(x) =y,\hspace{0.2cm} \phi_4(x) = z, \hspace{0.2cm}\phi_b(x) =27\prod_{i=1}^{3} \phi_i(x)\]
. These formulas will be used to construct the coefficient matrices in the context of 3D finite element discretization.
To identify the corresponding linear algebra problem  Eq.~(\ref{saddle}), we introduce the  basis functions in 3D $\{\phi_{j}\}_{j=1,...,n_{u}}$, for more details we refer the reader to see \cite{Elman}, then $\vec{u}_{\mathrm{h}}$ and $p_{\mathrm{h}}$
can be expressed as follows:
\begin{eqnarray}
\label{vilos}
\vec{u}_{\mathrm{h}}=\sum_{j=1}^{4} \textbf{u}_{j} \phi_{j} +\textbf{u}_{b} \phi_{b},  \hspace {0.3cm}p_{\mathrm{h}}=\sum_{k=1}^{4} \textbf{p}_{k}\phi_{k},
\end{eqnarray}
and use them to formulate the problem in terms of linear algebra.
The algebraic formulation of Eq.~(\ref{disc_var}), can be expressed as a system of linear equations, which the matrix of the system is a saddle-point matrix defined as follows:
 \begin{eqnarray}
  \label{saddle}
\mathcal{A}\underbrace{\left( \begin{array}{c}
\mathrm{u}\\\mathrm{p}
\end{array}\right)}_{{x}}
= \left( \begin{array}{cccc}
 A & 0 &  0& B_{1}^{T} \\ 0 & A  & 0 &B_{2}^{T}  \\ 0 & 0 & A & B_{3}^{T}\\  B_{1} &  B_{2} & B_{3} & C
 \end{array} \right)\left( \begin{array}{c}
\mathrm{u}_{1} \\ \mathrm{u}_{2} \\ \mathrm{u}_{3}  \\  \mathrm{p} 
 \end{array}\right)=\underbrace{\left( \begin{array}{c}
\mathrm{f}_{1}\\ \mathrm{f}_{2} \\\mathrm{f}_{3} \\  g
 \end{array}\right)}_{d},
 \end{eqnarray}
where   $A$, $B$ and $C$ are given by:
 \begin{eqnarray}
 A&=&[a_{i,j}],\hspace{0.2cm} a_{i,j}=\alpha \int_{\Omega}\phi_{i}\phi_{j} +\int_{\Omega} \vec{\nabla}\phi_{i}:\vec{\nabla}\phi_{j}, \hspace{0.2cm} i,j=1,2,3, \nonumber\\
B_i&=&[b_{k,j}],\hspace{0.2cm} b_{k,j}=-\int_{\Omega} \psi_{k} \vec{\nabla}\cdot\phi_{j}, \hspace{0.2cm} j,k=1,2,3,\nonumber\\
C&=&\sum_{i=1}^{3} B_{ib}B_{ib}^{T}, \hspace{0.1cm}where \hspace{0.2cm}
B_{ib}=\frac{32}{105}|T|\begin{pmatrix}
\partial_i \phi_1 \\
\partial_i \phi_2 \\
\partial_i \phi_3 \\
\partial_i \phi_4 \\
\end{pmatrix},\nonumber
 \end{eqnarray}
and $|T|$ stands for the tetrahedron volume.
The paper is structured as follows. In section $2$  we  introduce  the global regularized  $\mathcal{P}_{Gr}$  preconditioner, used   for solving the  saddle-point problem $(\ref{saddle})$ arising within the weak formulation of generalized Stokes problem. The new convergence results of the preconditioned global conjugate gradient method will be analysed in section $3$. We present simulation results for 3D generalized Stokes problems obtained using the various preconditioners in Sec.~\ref{Numerical}. In conclusion, we provide  some recommendations 
on chosen  the appropriate preconditioner based on the characteristics of the  problem.
 \section{ Preconditioned Krylov subspace methods}\label{Pia}
Iterative methods are  suitable for solving   high-dimensional sparse linear systems, such as~(\ref{saddle}). One significant advantage of iterative methods  over direct methods is that they don’t  necessitate $\mathcal{A}$ to be factored. This hold true for Krylov subspace methods, including the $\mathcal{R}$GMRES and $\mathcal{F}$GMRES  methods implemented below. Nevertheless, drawback of iterative solvers are inexact, which necessitates a considerable  number of iterations to achieve the desired tolerance. To overcome this challenge,  preconditioning techniques are employed as a solution. In this context, the preconditioner $\mathcal{P}$ applies a linear transformation to system $(\ref{saddle})$ for reducing the condition number of the transformed matrix $\mathcal{P}^{-1}\mathcal{A}$. The resulting  preconditioned linear system can be represented as follows:
\begin{eqnarray}
\label{Psaddle}
\mathcal{P}^{-1}\mathcal{A}{x}=
\mathcal{P}^{-1}d.
\end{eqnarray}
Various implementations of  the preconditioned iterative approach can be devised based on the selection of the preconditioning linear transformation $\mathcal{P}$ and saddle-point matrix $\mathcal{A}$.  
When solving equation~\eqref{Psaddle} using any of the preconditioners  described in the following subsection, the iterations are stopped as soon as the Euclidean norm of the current residue is lower than a tolerance threshold  :
\begin{eqnarray}
\label{tau}
 \frac{\|\mathcal{P}^{-1}d-\mathcal{P}^{-1}\mathcal{A}x^{(k)}\|_{2}}{\|\mathcal{P}^{-1}d\|_{2}}<	\tau,
\end{eqnarray}
where $x^{(k)}$ denotes the current iterate and $\tau$ is the threshold value. Here $\|\cdot\|_{2}$ stands for the euclidean 
norm. The maximum number of iterations is $200$. We  allow  at most $100$ restarts. 
\subsection{Global regularized preconditioner}
Badahmane \cite{badahmane1}  introduced and examined a regularized preconditioner designed  for solving the nonsingular saddle-point problems of the form (\ref{saddle}),
where the coefficient matrix of the regularized preconditioner is given as follows:
 \begin{eqnarray}
  \label{Pr}
\mathcal{P}_{r}
=\left( \begin{array}{ccc|c}
 A & 0 &  0& B_{1}^{T} \\ 0 & A  & 0 &B_{2}^{T}  \\ 0 & 0 & A & B_{3}^{T}\\ 
 \hline 
 B_{1} & B_{2}  & B_{3}  & \beta Q
 \end{array} \right)=\left( \begin{array}{cc}
	A\otimes I_3 & B^{T} \\ B & \beta Q	\end{array} \right),
 \end{eqnarray}
 with $\beta>0$ and $Q\succ 0$. For example, for the discrete
Stokes system, $Q$ may be an approximation of the pressure Schur complement; see \cite{Elman,koko}. Each preconditioned  regularized $\mathcal{R}$GMRES iteration  requires the solution of a linear system of the following form :
\begin{eqnarray}
    \label{iteration}
\mathcal{P}_{r}\mathrm{z}=\mathcal{A}\mathrm{v}_{i},\hspace{0.1cm} i=1,...,k.
    \end{eqnarray}
Where $k$ denotes dimension of Krylov subspace, $\mathrm{v}_{i}\in\mathbb{R}^{N}$ are the basis vectors of Krylov subspace $\mathcal{K}_{k}\left(\mathcal{P}^{-1}_{r}\mathcal{A},\mathcal{P}^{-1}_{r}R_{0}\right)$, where $R_0=d-\mathcal{A}x_0$,\\ 
$\mathcal{K}_{k}\left(\mathcal{P}^{-1}_{r}\mathcal{A},   \mathcal{P}^{-1}_{r}R_{0}\right)=\text{span}\{\mathcal{P}^{-1}_{r}R_{0},...,
(\mathcal{P}^{-1}_{r}\mathcal{A})^{k-1}\mathcal{P}^{-1}_{r}R_{0}\}$ and $\mathrm{z}\in\mathbb{R}^{N}$ is the unknown solution.
\subsubsection{Properties of the preconditioner $\mathcal{P}_{r}$}
The distribution of eigenvalues and eigenvectors of a preconditioned matrix has a significant connection to how quickly Krylov subspace methods converge. Hence, it's valuable to analyze the spectral characteristics of the preconditioned matrix, denoted as  $\mathcal{P}_{r}^{-1}\mathcal{A}$. In the upcoming theorem, we will derive the pattern of eigenvalues for the matrix $\mathcal{P}_{r}^{-1}\mathcal{A}$.
\begin{proposition}
\cite{badahmane} The regularized preconditioner~(\ref{Pr}) has the block-triangular factorization:
\begin{eqnarray}
\left( \begin{array}{cc}
A\otimes I_3 & B^{T} \\ B & \beta Q	\end{array} \right)=\left( \begin{array}{cc}
I_{3n_{u}} & 0 \\ B(A^{-1}\otimes I_3)  & I_{n_p}	\end{array} \right)\left( \begin{array}{cc}
A\otimes I_{3} & 0\\ 0& \mathcal{S}	\end{array} \right)\left( \begin{array}{cc}
I_{3n_u} & (A^{-1}\otimes I_3)B \\ 0& I_{n_p}	\end{array} \right),
 \end{eqnarray}
where $\mathcal{S}=\beta Q-B(A^{-1}\otimes I_3)B^{T}$. If 
\( \beta \lambda_{\min}(Q) > \lambda_{\max}(B(A^{-1}\otimes I_3)B^T) \), 
then  \( \mathcal{S} \) is a positive definite matrix.
\end{proposition}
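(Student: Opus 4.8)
The proposition has two parts, and I would treat them separately. For the block-triangular factorization, the plan is a direct block-by-block multiplication of the three factors on the right-hand side. The only nonroutine ingredient is the Kronecker identity $(A\otimes I_3)^{-1}=A^{-1}\otimes I_3$, which holds because $A$ is invertible, together with $(A\otimes I_3)(A^{-1}\otimes I_3)=I_{3n_u}$. Using these, the $(1,1)$ block and the two off-diagonal blocks of the product collapse to $A\otimes I_3$, $B^{T}$ and $B$ respectively, while the $(2,2)$ block becomes $B(A^{-1}\otimes I_3)B^{T}+\mathcal{S}$, which equals $\beta Q$ exactly by the defining relation $\mathcal{S}=\beta Q-B(A^{-1}\otimes I_3)B^{T}$. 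So this part is purely computational and requires no estimates.

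For the positive definiteness of $\mathcal{S}$ I would first record that $\mathcal{S}$ is symmetric: $Q$ is symmetric by hypothesis, and $B(A^{-1}\otimes I_3)B^{T}$ is symmetric because $A$, hence $A^{-1}$, hence $A^{-1}\otimes I_3$, is symmetric. Moreover $A\succ 0$ (it is a mass-plus-stiffness Gram matrix on $H^{1}_{E_0}$), so $A^{-1}\otimes I_3\succ 0$ and therefore $B(A^{-1}\otimes I_3)B^{T}\succeq 0$ as a congruence of an SPD matrix; its eigenvalues are real and nonnegative and, being symmetric, it satisfies $x^{T}B(A^{-1}\otimes I_3)B^{T}x\le\lambda_{\max}\!\left(B(A^{-1}\otimes I_3)B^{T}\right)\|x\|_2^{2}$ for all $x$. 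Likewise $Q\succ 0$ gives $x^{T}Qx\ge\lambda_{\min}(Q)\|x\|_2^{2}$.

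Combining these two scalar bounds in the Rayleigh quotient of $\mathcal{S}$ then yields, for every $x\neq 0$,
\[
x^{T}\mathcal{S}x=\beta\,x^{T}Qx-x^{T}B(A^{-1}\otimes I_3)B^{T}x\ \ge\ \bigl(\beta\lambda_{\min}(Q)-\lambda_{\max}(B(A^{-1}\otimes I_3)B^{T})\bigr)\|x\|_2^{2},
\]
and the right-hand side is strictly positive precisely under the stated hypothesis $\beta\lambda_{\min}(Q)>\lambda_{\max}(B(A^{-1}\otimes I_3)B^{T})$; hence $\mathcal{S}\succ 0$. The argument is essentially routine, and the only point needing a little care — the closest thing to an obstacle — is making sure that $\mathcal{S}$ is genuinely symmetric, so that ``$\succ 0$'' is meaningful and equivalent to positivity of the Rayleigh quotient, and that the SPD-ness of $A$ (which underlies the semidefiniteness of $B(A^{-1}\otimes I_3)B^{T}$) is available; both follow directly from the construction of the matrices in $(\ref{saddle})$.
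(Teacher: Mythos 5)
Your proof is correct; note that the paper itself gives no proof of this proposition (it is simply cited from the reference [badahmane]), and your argument --- direct block multiplication using $(A\otimes I_3)^{-1}=A^{-1}\otimes I_3$ for the factorization, and a Rayleigh-quotient bound $x^{T}\mathcal{S}x\ge\bigl(\beta\lambda_{\min}(Q)-\lambda_{\max}(B(A^{-1}\otimes I_3)B^{T})\bigr)\|x\|_2^{2}$ for the definiteness claim --- is the standard one that the citation presupposes. One small point: the multiplication only reproduces the $(1,2)$ block $B^{T}$ if the upper-right block of the third factor is $(A^{-1}\otimes I_3)B^{T}$ rather than $(A^{-1}\otimes I_3)B$ as printed (otherwise the dimensions do not even match, since $B$ is $n_p\times 3n_u$); you silently and correctly worked with the corrected version, but it is worth flagging this as a typo in the stated factorization.
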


\begin{theorem}
\label{theo1}
	Let the preconditioner $\mathcal{P}_{r}$  be defined as in~(\ref{Pr}). Then for all $\beta>0$, $\mathcal{P}_{r}^{-1}\mathcal{A}$ has:
  \begin{itemize}
      \item  $1$ is an eigenvalue with multiplicity $n_u$, \item $\lambda_{1},..,\lambda_{n_{p}}$ eigenvalues,
\item If $\lambda_{max}(B^{T}(A^{-1}\otimes I_3)B)<\lambda_{min}(C)$,  then $\lambda>0$,
\item 
If $\lambda_{min}(B^{T}(A^{-1}\otimes I_3)B)>\lambda_{max}(C)$, then $\lambda<0$,
   \item  If $\displaystyle{\beta} \to -\displaystyle{\frac{c}{q}}$, then $$ \lambda \underset{\displaystyle{\beta}\to-\displaystyle{\frac{c}{q}}}
{\overset{}{\longrightarrow}}1,$$  
   \item   If $\beta \to 0$, then
$$ \lambda \underset{\beta \to 0}
{\overset{}{\longrightarrow}}1-\displaystyle{\frac{c}{a}},$$ 
\end{itemize}
\end{theorem}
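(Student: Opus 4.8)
The plan is to extract all of the spectral information directly from the block factorization of $\mathcal{P}_r$ supplied by the preceding Proposition. Set $\mathcal{M} := B(A^{-1}\otimes I_3)B^T$ and $\mathcal{S} := \beta Q-\mathcal{M}$, and write $\mathcal{P}_r=LDU$ with $L,U$ unit block-triangular and $D=\mathrm{diag}(A\otimes I_3,\ \mathcal{S})$ as in the Proposition. Using $\mathcal{A}=\mathcal{P}_r+\mathrm{diag}(0,\ C-\beta Q)$, I would invert $\mathcal{P}_r$ via $U^{-1}D^{-1}L^{-1}$ and multiply out; the key structural fact is that $\mathcal{P}_r^{-1}\mathcal{A}$ is block upper triangular,
\[
\mathcal{P}_r^{-1}\mathcal{A}=\begin{pmatrix} I_{3n_u} & -(A^{-1}\otimes I_3)B^T\mathcal{S}^{-1}(C-\beta Q)\\ 0 & I_{n_p}+\mathcal{S}^{-1}(C-\beta Q)\end{pmatrix},
\]
and $I_{n_p}+\mathcal{S}^{-1}(C-\beta Q)=\mathcal{S}^{-1}\big(\mathcal{S}+C-\beta Q\big)=\mathcal{S}^{-1}(C-\mathcal{M})$. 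Reading off the characteristic polynomial, the leading identity block contributes the eigenvalue $1$ (with multiplicity the velocity dimension $3n_u$), and the remaining $n_p$ eigenvalues $\lambda_1,\dots,\lambda_{n_p}$ are exactly those of $\mathcal{S}^{-1}(C-\mathcal{M})$, i.e. the solutions of the generalized eigenproblem $(C-\mathcal{M})p=\lambda(\beta Q-\mathcal{M})p$. This settles the first two bullets.

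For the sign statements, I would invoke the Proposition's hypothesis so that $\mathcal{S}=\beta Q-\mathcal{M}\succ 0$; then $\mathcal{S}^{-1}(C-\mathcal{M})$ is similar to the symmetric matrix $\mathcal{S}^{-1/2}(C-\mathcal{M})\mathcal{S}^{-1/2}$, hence the $\lambda_i$ are real and share the inertia of $C-\mathcal{M}$. By Weyl's inequalities, $\lambda_{\max}(\mathcal{M})<\lambda_{\min}(C)$ forces $C-\mathcal{M}\succ 0$, so every $\lambda_i>0$; symmetrically, $\lambda_{\min}(\mathcal{M})>\lambda_{\max}(C)$ forces $C-\mathcal{M}\prec 0$, so every $\lambda_i<0$. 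Equivalently, one expands $\lambda=\dfrac{p^{T}(C-\mathcal{M})p}{p^{T}(\beta Q-\mathcal{M})p}$ and bounds numerator and denominator separately.

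For the two limiting regimes, I would work from the generalized eigenproblem and pass to the corresponding eigen-mode, in which $A,B,C,Q$ reduce to scalars $a,b,c,q$ and $\mathcal{M}$ to $m=b^{2}/a$, so that $\lambda=\dfrac{c-m}{\beta q-m}$. Letting $\beta\to 0$ gives $\lambda\to 1-c/m$, which is the stated $1-c/a$ in the shorthand where $a$ denotes $m$ for that mode; letting $\beta q\to c$ (the value written $-c/q$ in the statement) makes numerator and denominator coincide, so $\lambda\to 1$. In the general, non-commuting case the same conclusion is obtained mode-by-mode by passing to the limit in $\mathcal{S}^{-1}(C-\mathcal{M})\to -\mathcal{M}^{-1}(C-\mathcal{M})=I-\mathcal{M}^{-1}C$ and reading off its spectrum.

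The delicate point — and the one deserving the most care — is the invertibility and definiteness of $\mathcal{S}=\beta Q-\mathcal{M}$: the factorization, the block-triangular reduction, and the sign conclusions all rest on $\mathcal{S}$ being nonsingular (indeed positive definite for the sign claims), which by the Proposition requires $\beta\lambda_{\min}(Q)>\lambda_{\max}(\mathcal{M})$ rather than merely $\beta>0$. One should therefore restrict $\beta$ to that admissible range, or treat the exceptional $\beta$ (where $\mathcal{S}$ is singular) separately. A minor bookkeeping item is the multiplicity of the eigenvalue $1$: it is $3n_u$, possibly augmented if some $\lambda_i$ happens to equal $1$.
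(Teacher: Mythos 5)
Your argument is correct and lands on the same reduced pressure eigenproblem as the paper, but by a genuinely different route, and along the way it quietly repairs several defects in the paper's own proof. The paper works directly with the generalized eigenproblem $\mathcal{A}(\mathrm{u};\mathrm{p})=\lambda\mathcal{P}_{r}(\mathrm{u};\mathrm{p})$, separates the cases $\mathrm{p}=0$ and $\lambda\neq 1$, eliminates $\mathrm{u}$, and premultiplies by $\mathrm{p}^{T}/\mathrm{p}^{T}\mathrm{p}$ to obtain $\lambda=(a-c)/(a+\beta q)$. You instead use $\mathcal{A}=\mathcal{P}_{r}+\mathrm{diag}(0,\,C-\beta Q)$ together with the $LDU$ factorization from the Proposition to exhibit $\mathcal{P}_{r}^{-1}\mathcal{A}$ as block upper triangular with $(2,2)$ block $\mathcal{S}^{-1}(C-\mathcal{M})$. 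This buys three things. First, the multiplicity of the eigenvalue $1$ becomes transparent: it is $3n_u$, the full velocity dimension, whereas the statement (and the paper's count of eigenvectors $(\mathrm{u}^{(i)};0)$) says $n_u$, which cannot be right since $3n_u+n_p=N$. Second, your reduced problem $(C-\mathcal{M})\mathrm{p}=\lambda(\beta Q-\mathcal{M})\mathrm{p}$ carries the correct sign: the paper's elimination produces $\mathcal{M}+\beta Q$ on the right-hand side and hence $\lambda=(a-c)/(a+\beta q)$, which under $\lambda_{\max}(\mathcal{M})<\lambda_{\min}(C)$ would give $\lambda<0$ and contradict the theorem's own bullet; your denominator $\beta q-a$ is the one consistent with the stated sign claims, and it also explains why the degenerate limit is really $\beta q\to c$ rather than the literal $\beta\to-c/q$ (a value outside the admissible range anyway). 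Third, the similarity to $\mathcal{S}^{-1/2}(C-\mathcal{M})\mathcal{S}^{-1/2}$ shows the $n_p$ nonunit eigenvalues are real and governed by the inertia of $C-\mathcal{M}$, something the paper never establishes. You are also right that the definiteness of $\mathcal{S}=\beta Q-\mathcal{M}$ is the load-bearing hypothesis: the sign conclusions require $\beta\lambda_{\min}(Q)>\lambda_{\max}(\mathcal{M})$, not merely $\beta>0$ as the theorem asserts. The one place where you are no more rigorous than the paper is the pair of limiting statements, since both arguments freeze the mode-dependent scalars $a,c,q$ while varying $\beta$ even though the eigenvector depends on $\beta$; your closing suggestion of passing to the limit in $\mathcal{S}^{-1}(C-\mathcal{M})\to I-\mathcal{M}^{-1}C$ (which additionally needs $B$ of full row rank so that $\mathcal{M}$ is invertible) is the cleaner way to make the $\beta\to 0$ claim precise.
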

\begin{proof}
Assume that $\lambda$ represents an eigenvalue of the preconditioned matrix  and $(\mathrm{u};\mathrm{p})$ is the associated eigenvector. In order to deduce the distribution of eigenvalues, we analyze the following generalized eigenvalue problem
	\begin{eqnarray}
	\label{P-1A}
 \mathcal{A} \left(\begin{array}{c}
	\mathrm{u}\\ 
    \mathrm{p}	\end{array}\right)=\lambda\mathcal{P}_{r}\left(\begin{array}{c}
	\mathrm{u}\\ 
    \mathrm{p}	\end{array}\right).
	\end{eqnarray}
	(\ref{P-1A})~can be reformulated  as follows
 
	\begin{equation}
	\label{RP1-A}
\left\{
\begin{array}{rl}
(1-\lambda)(A\otimes I_3)\mathrm{u}+(1-\lambda)B^{T}\mathrm{p}&=0, \\
 (1-\lambda)B\mathrm{u}+C\mathrm{p}&=\lambda\beta Q\mathrm{p}.
\end{array}
\right.
	\end{equation}
In the case where $\mathrm{p}=0$, equation~(\ref{RP1-A}) is always true for $\lambda=1$, consequently, there exist $n_{u}$ linearly independent eigenvectors $\left(\begin{array}{c}
     \mathrm{u}^{(i)}\\
     0
\end{array}\right)$, $i=1,..,n_u$, corresponding to the eigenvalue $1$, where $\mathrm{u}^{(i)}$  are arbitrary linearly independent vectors.
	If $\lambda\neq1$ and $\mathrm{u}=0$, from (\ref{RP1-A}), it can be deduced that $\mathrm{p}=0$ and $\mathrm{u}=0$. This conflicts with the initial assumption that the column vector $\left(\begin{array}{c}
     \mathrm{u} \\
     \mathrm{p}
\end{array}\right)$ is an eigenvector of the preconditioned matrix $\mathcal{P}_{r}^{-1}\mathcal{A}$.
If $\mathrm{p}=0$, then  from~(\ref{RP1-A}), we deduce that $\mathrm{u}$ must be $0$, this contradicts the initial assumption that $(\mathrm{u};\mathrm{p})$ is the eigenvector of $\mathcal{P}_{r}^{-1}\mathcal{A}$ and  therefore $\mathrm{u}\neq 0$ and  
 $\mathrm{p}\neq 0$, if $\mathrm{p}$ satisfies the second equation of~(\ref{RP1-A}), then 
 \begin{eqnarray}
 \label{Q1}
\left(B(A\otimes I_3)^{-1}B^{T}-C\right)\mathrm{p}&=&\lambda\left(B(A\otimes I_3)^{-1}B^{T}+\beta Q \right)\mathrm{p},
\end{eqnarray}
which can be rewritten equivalently as follows:
 \begin{eqnarray}
 \label{Q}
\left(B(A^{-1}\otimes I_3)B^{T}-C\right)\mathrm{p}&=&\lambda\left(B(A^{-1}\otimes I_3)B^{T}+\beta Q \right)\mathrm{p}, \nonumber
 \end{eqnarray}
 premultiplying  $(\ref{Q})$ with $\frac{\displaystyle{\mathrm{p}^{T}}}{\displaystyle{\mathrm{p}^{T}\mathrm{p}}}$ gives 
\begin{eqnarray}
	\label{lambda}
	\lambda=\frac{a-c}{a+\beta q},
	\end{eqnarray}
 where :
   \begin{itemize}
   \item  $a=\frac{\displaystyle{\mathrm{p}^{T}B^{T}(A^{-1}\otimes I_3)B\mathrm{p}}}{\displaystyle{\mathrm{p}^{T}\mathrm{p}}}$,
  \end{itemize}
%  \succ 0$ ($A \succeq 0$)
since $A \succ 0$,  $Q\succ 0$ and $C\succeq 0$, $B$  have a full column rank, we know that  $a>0$.
It follows from $(\ref{lambda})$ that
	\begin{eqnarray}
	\label{lambda2}
	\lambda=1-\frac{\beta q+c}{a+\beta q},
	\end{eqnarray}
 \begin{itemize}
\item if $\lambda_{max}(B^{T}A^{-1}B)<\lambda_{min}(C)$,  then $\lambda>0$,
\item 
if $\lambda_{min}(B^{T}A^{-1}B)>\lambda_{max}(C)$, then $\lambda<0$,
   \item  if $\displaystyle{\beta} \to -\displaystyle{\frac{c}{q}}$, then $$ \lambda \underset{\displaystyle{\beta}\to-\displaystyle{\frac{c}{q}}}
{\overset{}{\longrightarrow}}1,$$  
   \item   if $\beta \to 0$, then
$$ \lambda \underset{\beta \to 0}
{\overset{}{\longrightarrow}}1-\displaystyle{\frac{c}{a}}.$$ 
\end{itemize}
Thus, the proof of theorem~\ref{theo1} is completed.
\end{proof}
\subsubsection{ Implementation of the regularized preconditioner $\mathcal{P}_{r}$~(\ref{Pr}).}
$(\ref{iteration})$ needs to be solved at each iteration step. To this end we use the following algorithm :
\begin{algorithm}[H]

\caption{: The regularized preconditioner $\mathcal{P}_{r}$}
\begin{algorithmic}
    \State 1: $\mathcal{P}_{r}\mathrm{z}=\mathcal{A}\mathrm{v}_{i}$, where $\mathrm{v}_{i}=(\mathrm{v}_{i}^{(1)};\mathrm{v}_{i}^{(2)})$,
       \State 2: $\mathcal{S}\mathrm{z}^{(2)}=\left(C- B(A^{-1}\otimes  I_{3})B^{T}\right)\mathrm{v}_{i}^{(2)}$,
    \textbf{ \State    3}: 
  $  (A\otimes  I_{3})\mathrm{z}^{(1)}=A\mathrm{v}_{i}^{(1)}+B^{T}\left(\mathrm{v}_{i}^{(2)}-\mathrm{z}^{(2)}\right),$   
%\textbf{$A\mathrm{z}^{(1)}=A\mathrm{v}_{i}^{(1)}+B^{T}\left(\mathrm{v}_{i}^{(4)}-z^{(4)}\right)$,}
     %\textbf{ \State    4:} $Az^{(2)}=A\mathrm{v}_{i}^{(2)}+B^{T}\left(\mathrm{v}_{i}^{(4)}-z^{(4)}\right)$,
% \textbf{\State  5: }
%$Az^{(3)}=A\mathrm{v}_{i}^{(3%)}+B^{T}\left(\mathrm{v}_{i}^%{(4)}-z^{(4)}\right)$,    
    \State    6: $\mathrm{z}=\left(\begin{array}{c}
				\mathrm{z}^{(1)} \\ \mathrm{z}^{(2)}
    %\\ \mathrm{z}^{(3)}\\ \mathrm{z}^{(4)}
				\end{array} \right).$
      \end{algorithmic} 
        \label{algor}
\end{algorithm}

\begin{itemize}
\item The matrix, unknown and right-hand side of linear system~\eqref{iteration} are first partitioned.
\item  
$\mathcal{S} \succ 0 $ and its sparsity pattern is more complicated than the sparsity pattern of $Q$ in general. We solve it iteratively  by $\mathcal{P}$CG method.
\item  
The step $3$, the matrix $A$ has a very high order. Since $A\succ 0 $, therefore, we can employ  $\mathcal{P}$CG to solve  the linear system. Additionally, the linear system with coefficient matrix $A$ can be solved  by  the Cholesky factorization in combination with approximate minimum degree (AMD) reordering from MATLAB library.
\item The solution $\mathrm{z}$ of block linear system~\eqref{iteration} is eventually obtained by concatenating partial solutions $\mathrm{z}^{(1)}$ and $\mathrm{z}^{(2)}$.
%$\mathrm{z}^{(3)}$  and %$\mathrm{z}^{(4)}$. 
\end{itemize}
\begin{idea}
\label{idea}
In step $3$ of Algorithm \ref{algor},
 the primary motivation of this work is not to solve it  independently but rather, using  $\mathcal{P}$GCG method~\cite{GBICG} for  solving  linear system with multiple right-hand sides of the following  form :
\begin{eqnarray}
\label{PSMR}
A\mathcal{X}=\mathcal{H},
\end{eqnarray}
where: $\mathcal{X}$ and $\mathcal{H}$  are both  an $n\times3$ matrices.  Each column of matrix $\mathcal{X}$ is denoted as $\mathcal{X}^{(j)}$, and each column of matrix $\mathcal{H}$ is denoted as  $\mathcal{H}^{(j)}=A\mathrm{v}_{i}^{(j)}+B^{T}(\mathrm{v}_{i}^{(4)}-\mathrm{z}^{(4)})$, with $j=1,..,3$,
$\mathcal{X}_{0}$ is the initial guess of  solution (\ref{PSMR}) and $R_{0}=\mathcal{H}-A\mathcal{X}_{0}$ is the initial residual.
The preconditioning technique helps to improve the convergence rate of the $\mathrm{GCG}$ method by transforming the original linear system into an equivalent one with better properties, such as a better-conditioned matrix or a smaller size. 
\end{idea}
By leveraging the structure of the regularized preconditioner $\mathcal{P}_{r}$ (\ref{Pr}) and Idea $1$, in the rest of the article, we refer
to our new preconditioners as  $\mathcal{P}_{Gr}$, where
\begin{itemize}
    \item 
$\mathcal{P}_{Gr}$ : is the global regularized  preconditioner.
\end{itemize}
The following algorithmic version of the global regularized  preconditioner $\mathcal{P}_{Gr}$ for solving (\ref{iteration}),  can be
defined as follows:
\begin{algorithm}[H]

\caption{: The global regularized preconditioner $\mathcal{P}_{Gr}$}
\begin{algorithmic}
    \State 1: $\mathcal{P}_{Gr}\mathrm{z}=\mathcal{A}\mathrm{v}_{i}$, where 
    $\mathrm{v}_{i}=(\mathrm{v}_{i}^{(1)};\mathrm{v}_{i}^{(2)};\mathrm{v}_{i}^{(3)};\mathrm{v}_{i}^{(4)})$,
       \State 2: $\mathcal{S}\mathrm{p}=\left(C-B_{1}A^{-1}B_{1}^{T}-B_{2}A^{-1}B_{2}^{T}-B_{3}A^{-1}B_{3}^{T}\right)\mathrm{v}_{i}^{(4)}$,
    \textbf{ \State    3}:  $A\mathcal{X}=\mathcal{H},$
     \textbf{ \State    4}:  $\mathrm{z}=\left(\begin{array}{c}
				\mathcal{X}^{(1)} \\ \mathcal{X}^{(2)}\\ \mathcal{X}^{(3)}\\ 
                 \mathrm{p}
				\end{array} \right).$
      \end{algorithmic} 
        \label{algor2}
\end{algorithm}
\begin{itemize}
\item The matrix, unknown and right-hand side of linear system~\eqref{iteration} are first partitioned,
\item  
$\mathcal{S}\succ 0$, we solve it iteratively  by $\mathcal{P}$CG, where the preconditioner $\mathcal{P}$, utilized is based
on incomplete Cholesky factorization (ICHOL),
\item  
Since  $A\succ 0$, we  solve the linear system with multiple right-hand sides  iteratively by $\mathcal{P}$GCG  method \cite{GBICG},
\item The solution $\mathrm{z}$ of block linear system~\eqref{iteration} is  obtained  by combining partial solutions $\mathcal{X}^{(1)}$, $\mathcal{X}^{(2)}$, $\mathcal{X}^{(3)}$  and $\mathrm{p}$. 
\end{itemize}
\section{Global Krylov subspace methods}
In this section, we will review some properties and definitions of global methods~\cite{badahmane,GBICG}. To begin, we will introduce some symbols and notations that will be referenced throughout this paper.
\begin{definition}~(\cite{badahmane,GBICG}).
Let $\mathcal{X}$ and $\mathcal{Y}$ are two matrices of dimension $n_{u}\times 3$, we consider the scalar product 
\begin{eqnarray}
<\mathcal{X},\mathcal{Y}>_{F}=\mathrm{Trace}(\mathcal{X}^{T}\mathcal{Y}),
\end{eqnarray}
where $\text{Trace}$ is the trace of the square matrix $\mathcal{X}^{T}\mathcal{Y}$, refers to the sum of its diagonal entries.
The associated norm is the Frobenius norm, denoted $\|.\|_{F}$, which can be expressed as follows:
\begin{eqnarray}
 \label{normF}
\|\mathcal{X}\|_{F}=\sqrt{\mathrm{Trace}\left(\mathcal{X}^{T}\mathcal{X}\right)}.
\end{eqnarray}
\end{definition}
\begin{definition}
Let $V\in\mathbb{R}^{n_{u}\times 3}$, the global Krylov subspace $\mathcal{K}_{k}(A,V)$  is the
	subspace  spanned by the  matrices $V$, $AV$, ..., $A^{k-1}V.$
\end{definition}
\begin{remark}
\label{Kry}
Let $V$ be a real matrix of dimension $n_{u}\times 3$. According to the definition of the subspace $\mathcal{K}_k (A,V)$, we have 
\begin{eqnarray}
\mathcal{Z}\in \mathcal{K}_{k}(A,V)\iff \mathcal{Z}=\sum_{i=1}^{k}\psi_{i}A^{i-1}V, \hspace{1.2cm}
\psi_{i}\in\mathbb{R}, i=1,....,k.  \nonumber
\end{eqnarray}
In other words, $\mathcal{K}_k(A, V)$ is the subspace of $\mathbb{R}^{n_{u}\times 3}$ that
contains all the matrices of dimension $n_{u}\times 3$, written as  $\mathcal{Z}=\Theta(A)V$, where $\Theta$ is a polynomial of degree at most equal  $k-1$.
\end{remark}
\begin{definition}
	The Kronecker product of $M$ and $J$, where $M$ is a matrix with dimension $m \times r$ and $J$ is a matrix with dimensions $n \times s$, can be expressed in a specific mathematical form
	\begin{eqnarray}
 \label{Kronecker}
	M\otimes J=\left(\begin{array}{ccc}
	M_{11 }J &\hdots  &  M_{1r}J\\
	\vdots &\ddots &\vdots    \\
	M_{m1}J & \hdots &  M_{mr}J
	\end{array}\right)\in\mathbb{R}^{m n\times rs}.
   \end{eqnarray}
\end{definition}
\begin{definition}{(Diamond Product)}\label{Diamond}
	Let $Y$ and $\mathcal{Z}$ be a matrices of  dimensions $n_{u}\times 3s$ and $n_{u}\times 3r$, respectively. The matrices $Y$ and $\mathcal{Z}$ are constructed by concatenating $n_{u}\times 3$ matrices $Y_{i}$ and $\mathcal{Z}_{j}$ (for $i = 1,...., s$ and $j = 1,...,r)$. The symbol 
 $\diamond$ represents a specific product defined by the following formula:
	$$ Y^T
	\diamond \mathcal{Z}=
	\left(
	\begin{array}{cccc}
	\langle Y_{1},\mathcal{Z}_{1} \rangle_{F} & \langle Y_{1},\mathcal{Z}_{2} \rangle_{F} & \ldots & \langle Y_{1},\mathcal{Z}_{r} \rangle_{F} \\
	\langle Y_{2},\mathcal{Z}_{1} \rangle_{F}&\langle Y_{2},\mathcal{Z}_{2} \rangle_{F}&\ldots&\langle Y_{2},\mathcal{Z}_{r} \rangle_{F}\\
	\vdots&\vdots&\ddots&\vdots\\
	\langle Y_{s},\mathcal{Z}_{1} \rangle_{F}&\langle Y_{s},\mathcal{Z}_{2} \rangle_{F}&\ldots&\langle Y_{s},\mathcal{Z}_{r} \rangle_{F}
	\end{array}
	\right)\in\mathbb{R}^{s\times r}.$$
\end{definition}

\subsection{Convergence result for the preconditioned global conjugate gradient method ($\mathcal{P}$GCG)}
In this part of the paper, we will be referring to a mathematical operation called a  symmetric bilinear form on  $\mathbb{R}^{n_{u}\times 3}$
	\begin{eqnarray}
     \label{TraceP}
	&\mathbb{R}^{n_{u}\times 3}\times \mathbb{R}^{n_{u}\times 3}&\rightarrow \mathbb{R} \nonumber\\	&\left(\mathcal{X},\mathcal{Y}\right)&\rightarrow \langle \mathcal{X},\mathcal{Y}\rangle_{\mathcal{P}^{-1}}=\mathrm{Trace}\left(\mathcal{Y}^{T}\mathcal{P}^{-1}\mathcal{X}\right),
	\end{eqnarray}
 (\ref{TraceP}) can be expressed also  in terms of Diamond product defined in Definition~\ref{Diamond} 
\begin{eqnarray}
  \label{DiamondP}
	&\mathbb{R}^{n_{u}\times 3}\times \mathbb{R}^{n_{u}\times 3}&\rightarrow \mathbb{R} \nonumber\\
	&\left(\mathcal{X}, \mathcal{Y}\right)&\rightarrow \langle \mathcal{X},\mathcal{Y}\rangle_{\mathcal{P}^{-1}}=\mathcal{Y}^{T}\diamond \mathcal{P}^{-1}\mathcal{X},
	\end{eqnarray}
the associated  norm $\|.\|_{\mathcal{P}^{-1}}$ is given as
\begin{eqnarray}
\label{norm}
\|\mathcal{X}\|_{\mathcal{P}^{-1}}=\sqrt{\mathrm{Trace}\left(\mathcal{X}^{T}\mathcal{P}^{-1}\mathcal{X}\right)},  \hspace{0.1cm}\text{for}\hspace{0.1cm}  \mathcal{X}\in\mathbb{R}^{n_{u}\times 3}.
\end{eqnarray}
The preconditioned  global conjugate gradient method is a mathematical algorithm that can be used to solve a linear system with multiple right-hand sides~(\ref{PSMR}). It is based on the fact that the preconditioning matrix $\mathcal{P}$ can be decomposed using the ICHOL decomposition into a lower triangular matrix $G$ and its transpose $G^{T}$. This decomposition can be used to apply a central preconditioning technique that preserves the symmetry of the matrix $A$, which can lead to faster and more accurate solutions
\begin{eqnarray}
\label{PSMR1}
G^{-1}AG^{-T}\mathcal{Y}=G^{-1}\mathcal{H} \hspace{ 0.2cm} \text{with}\hspace{ 0.2cm}\mathcal{Y}=G^{T}\mathcal{X}.
\end{eqnarray}
The  resulting matrix $G^{-1}AG^{-T}$ will still be  symmetric positive definite matrix. Additionally  the initial  residual  of~(\ref{PSMR1}) is equal to  $G^{-1}R_{0}$, which $R_{0}$ is the initial residual of~(\ref{PSMR}).
 \begin{theorem}
 \label{PGCG}
The $\mathcal{P}$GCG method constructs at step $k$, the approximation
$\mathcal{Y}_k$ satisfying the following two relations :
\begin{equation}\label{gmres0}
\mathcal{Y}_k-\mathcal{Y}_0 \in \mathcal{K}_{k}(A\mathcal{P}^{-1},R_{0})\; {\rm and} \; {R}_k{\perp}_{{\langle .,.\rangle}_{\mathcal{P}^{-1}}} \mathcal{K}_{k}
(A\mathcal{P}^{-1},R_{0}).
\end{equation}
%and ${\langle . , %.\rangle}_{\mathcal{P}^{-1}}$  is defined as follows:
%\begin{eqnarray}
 % \label{DiamondP}
%&\mathbb{R}^{n_{u}\times %3}\times %\mathbb{R}^{n_{u}\times %3}&\rightarrow \mathbb{R} %\nonumber
%\\
%	&\left(\mathcal{X}, %\mathcal{Y}\right)&\rightarrow %\langle ù
%\mathcal{X},\mathcal{Y}\rangle_%{\mathcal{P}^{-1}}=\mathrm{Trace%}\left(\mathcal{Y}^{T}\mathcal{%P}^{-1}\mathcal{X}\right).
	%\end{eqnarray}
\end{theorem}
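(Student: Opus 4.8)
The plan is to show that the $\mathcal{P}$GCG method is nothing but the ordinary (global) conjugate gradient method applied to the centrally preconditioned, symmetric positive definite system \eqref{PSMR1}, and then to translate the standard CG optimality/Galerkin property back to the original variables. First I would recall that CG applied to $G^{-1}AG^{-T}\mathcal{Y}=G^{-1}\mathcal{H}$ with the Frobenius inner product $\langle\cdot,\cdot\rangle_F$ builds, at step $k$, an iterate $\mathcal{Y}_k$ with
\begin{equation*}
\mathcal{Y}_k-\mathcal{Y}_0 \in \mathcal{K}_{k}(G^{-1}AG^{-T},\widetilde{R}_0),\qquad \widetilde{R}_k \perp_{\langle\cdot,\cdot\rangle_F}\mathcal{K}_{k}(G^{-1}AG^{-T},\widetilde{R}_0),
\end{equation*}
where $\widetilde{R}_0=G^{-1}R_0$ is the initial residual of \eqref{PSMR1} and $\widetilde{R}_k=G^{-1}\mathcal{H}-G^{-1}AG^{-T}\mathcal{Y}_k$. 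This is the matrix (global) analogue of the classical CG Galerkin condition, and it follows from the three-term recurrences defining the global CG directions together with the fact that $G^{-1}AG^{-T}\succ 0$, so that $\langle\cdot,\cdot\rangle_F$-orthogonality of the residuals and $G^{-1}AG^{-T}$-orthogonality ($\diamond$-orthogonality) of the search directions is maintained; I would cite \cite{GBICG,badahmane} for this rather than re-deriving it.

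Next I would undo the change of variables. Writing $\mathcal{Y}=G^{T}\mathcal{X}$, $\mathcal{Y}_0=G^{T}\mathcal{X}_0$, and recalling $\mathcal{P}=GG^{T}$, one has $\mathcal{P}^{-1}=G^{-T}G^{-1}$ and
\begin{equation*}
G^{-1}AG^{-T}=G^{-1}(A\mathcal{P}^{-1})G,\qquad \widetilde{R}_k=G^{-1}R_k,
\end{equation*}
where $R_k=\mathcal{H}-A\mathcal{X}_k$ is the residual of the original system \eqref{PSMR}. Consequently $(G^{-1}AG^{-T})^{j}\widetilde{R}_0=G^{-1}(A\mathcal{P}^{-1})^{j}R_0$, so multiplying the Krylov space for the transformed system on the left by $G$ gives
\begin{equation*}
G\,\mathcal{K}_{k}(G^{-1}AG^{-T},\widetilde{R}_0)=\mathcal{K}_{k}(A\mathcal{P}^{-1},R_0).
\end{equation*}
From $\mathcal{Y}_k-\mathcal{Y}_0=G^{T}(\mathcal{X}_k-\mathcal{X}_0)\in \mathcal{K}_{k}(G^{-1}AG^{-T},\widetilde{R}_0)$ I would like to conclude $\mathcal{X}_k-\mathcal{X}_0\in \mathcal{K}_{k}(A\mathcal{P}^{-1},R_0)$; this needs the identity $G^{-T}\mathcal{K}_{k}(G^{-1}AG^{-T},\widetilde R_0)=\mathcal{P}^{-1}\mathcal{K}_{k}(A\mathcal{P}^{-1},R_0)$, which again follows from $(G^{-1}AG^{-T})^{j}\widetilde R_0=G^{-1}(A\mathcal{P}^{-1})^jR_0$ and $G^{-T}G^{-1}=\mathcal{P}^{-1}$. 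Here one should be slightly careful: the natural statement is $\mathcal{X}_k-\mathcal{X}_0\in\mathcal{K}_k(\mathcal{P}^{-1}A,\mathcal{P}^{-1}R_0)$, and I would note that $\mathcal{K}_k(\mathcal{P}^{-1}A,\mathcal{P}^{-1}R_0)=\mathcal{P}^{-1}\mathcal{K}_k(A\mathcal{P}^{-1},R_0)$, matching the theorem's phrasing up to this identification (which is the usual left/right-preconditioning bookkeeping).

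For the orthogonality relation, I would take any $\mathcal{Z}\in\mathcal{K}_{k}(A\mathcal{P}^{-1},R_0)$ and write $\mathcal{Z}=G\widetilde{\mathcal{Z}}$ with $\widetilde{\mathcal{Z}}\in\mathcal{K}_{k}(G^{-1}AG^{-T},\widetilde{R}_0)$. Then
\begin{equation*}
\langle R_k,\mathcal{Z}\rangle_{\mathcal{P}^{-1}}=\mathrm{Trace}\!\left(\mathcal{Z}^{T}\mathcal{P}^{-1}R_k\right)=\mathrm{Trace}\!\left(\widetilde{\mathcal{Z}}^{T}G^{T}G^{-T}G^{-1}R_k\right)=\mathrm{Trace}\!\left(\widetilde{\mathcal{Z}}^{T}\widetilde{R}_k\right)=\langle \widetilde{R}_k,\widetilde{\mathcal{Z}}\rangle_F=0,
\end{equation*}
using $\mathcal{P}^{-1}=G^{-T}G^{-1}$ in the middle and the Galerkin property of global CG for \eqref{PSMR1} at the end. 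This establishes $R_k\perp_{\langle\cdot,\cdot\rangle_{\mathcal{P}^{-1}}}\mathcal{K}_{k}(A\mathcal{P}^{-1},R_0)$, completing the proof.

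I expect the main obstacle to be purely notational/bookkeeping rather than conceptual: making the left- versus right-preconditioned Krylov spaces in \eqref{gmres0} precisely consistent with how the $\mathcal{P}$GCG iterates are actually generated (the iterate increment naturally lives in $\mathcal{P}^{-1}\mathcal{K}_k(A\mathcal{P}^{-1},R_0)=\mathcal{K}_k(\mathcal{P}^{-1}A,\mathcal{P}^{-1}R_0)$), and checking that the Frobenius-inner-product CG theory for a single matrix carries over verbatim to the "block/global" setting with $3$ right-hand sides — which it does, since the global method is formally CG on the $\langle\cdot,\cdot\rangle_F$ space with the $\diamond$-product playing the role of scalars. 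I would handle the first point with a one-line remark identifying the two Krylov spaces, and the second by citing \cite{GBICG}.
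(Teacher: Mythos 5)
Your argument is correct, but note that the paper itself offers no proof of Theorem~\ref{PGCG}: it is stated as a known property of the $\mathcal{P}$GCG iteration and implicitly deferred to \cite{GBICG,badahmane}, and the subsequent error-norm theorem simply takes \eqref{gmres0} as its starting point. Your derivation therefore supplies something the paper omits, and it does so in the natural way: global CG applied to the split-preconditioned SPD system \eqref{PSMR1} satisfies the standard Galerkin conditions in the Frobenius inner product, and the identities $\mathcal{P}^{-1}=G^{-T}G^{-1}$, $G^{-1}AG^{-T}=G^{-1}(A\mathcal{P}^{-1})G$, $\widetilde{R}_k=G^{-1}R_k$ and $G\,\mathcal{K}_k(G^{-1}AG^{-T},G^{-1}R_0)=\mathcal{K}_k(A\mathcal{P}^{-1},R_0)$ transport both conditions back to the $\mathcal{P}^{-1}$-inner-product form \eqref{gmres0}; your trace computation for the orthogonality part is exactly right. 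The one point you flag is genuine and worth stating plainly: with $\mathcal{Y}_k=G^{T}\mathcal{X}_k$ as defined in \eqref{PSMR1}, the increment $\mathcal{Y}_k-\mathcal{Y}_0$ lies in $\mathcal{K}_k(G^{-1}AG^{-T},G^{-1}R_0)=G^{-1}\mathcal{K}_k(A\mathcal{P}^{-1},R_0)$, not literally in $\mathcal{K}_k(A\mathcal{P}^{-1},R_0)$; the first relation of \eqref{gmres0} holds verbatim only under the right-preconditioned identification $\mathcal{Y}_k=\mathcal{P}\mathcal{X}_k$ (equivalently, $\mathcal{X}_k-\mathcal{X}_0\in\mathcal{K}_k(\mathcal{P}^{-1}A,\mathcal{P}^{-1}R_0)$). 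That is an imprecision in the paper's statement, consistent with its later use of $R_k=A\mathcal{P}^{-1}\mathcal{E}_k$, rather than a gap in your proof; your one-line remark identifying the two Krylov spaces is the right way to close it.
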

In the following, we will establish expressions for the $\mathcal{P}^{-1}A\mathcal{P}^{-1}$-norm of the error, represented by $\|\mathcal{E}_{k}\|_{\mathcal{P}^{-1}A\mathcal{P}^{-1}}$, where $\mathcal{E}_{k}=\mathcal{Y}^{*}-\mathcal{Y}_k$  , a similar findings for the error norm of other Krylov methods, such as  standard CG or  global FOM, have been proven in a  previous study \cite{badahmane1,badahmane}. Firstly, we will provide formulation for $\|\mathcal{E}_{k}\|_{\mathcal{P}^{-1}A\mathcal{P}^{-1}}$ that involve the global Krylov matrix $K_{k}=\left[R_0,\left(A\mathcal{P}^{-1}\right)R_0,...,\left(A\mathcal{P}^{-1}\right)^{k-1} R_0\right]$.
\begin{theorem}
If the matrices ${K}^{T}_{k}\diamond \mathcal{P}^{-1}A\mathcal{P}^{-1}{K}_k$  and ${K}^{T}_{k+1}\diamond A^{-1}{K}_{k+1}$   are  nonsingular, then we can express the $\mathcal{P}$GCG error  at iteration $k$ as  follows 
\begin{eqnarray}
\label{errorNorm}
\|\mathcal{E}_{k}\|_{\mathcal{P}^{-1}A\mathcal{P}^{-1}}^{2}&=&\frac{1}{\displaystyle{e_1^T\left(K_{k+1}^{T}\diamond A^{-1}K_{k+1}\right)^{-1} e_1}},
\label{errNormdet}
%\|\mathcal{E}_{k}\|_{\mathcal{P}^{-1}A\mathcal{P}^{-1}}^{2}&=&\frac{\displaystyle{\text{det}\left(K_{k+1}^{T}\diamond A^{-1}K_{k+1}\right)}}{\displaystyle{\text{det}(K_{k}^{T}\diamond\mathcal{P}^{-1}A\mathcal{P}^{-1}K_{k})}},\\
\label{residueNrom}
%\|R_{k}\|_{\mathcal{P}^{-1}}^{2}&=&\frac{\displaystyle{\text{det}\left(K_{k}^{T}\diamond\mathcal{P}^{-1}K_{k}\right)\text{det}\left(K_{k+1}^{T}\diamond\mathcal{P}^{-1}K_{k+1}\right)}}{\displaystyle{\text{det}\left(K_{k}^{T}\diamond\mathcal{P}^{-1}A\mathcal{P}^{-1}K_{k}\right)^{2}}},
\end{eqnarray}
where 
$K_{k+1}=\left[R_0,\left(A\mathcal{P}^{-1}\right)R_0,...,\left(A\mathcal{P}^{-1}\right)^{k} R_0\right]$  and  $e_1$   is the first unit vector of  $\mathbb{R}^{k+1}$.
\end{theorem}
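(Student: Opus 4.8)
The plan is to derive \eqref{errorNorm} from the Galerkin characterisation of $\mathcal{P}$GCG in Theorem~\ref{PGCG} by switching to coordinates in the global Krylov matrix $K_{k+1}$ and then solving a small constrained minimisation. First I would record the block analogue of the scalar identity $\|e_k\|_A=\|r_k\|_{A^{-1}}$. Denoting by $R_k$ the residual attached to the $\mathcal{P}$GCG iterate, one has $R_k=A\mathcal{P}^{-1}\mathcal{E}_k$ — this is simply how the residual is tied to the operator $A\mathcal{P}^{-1}$ whose global Krylov space appears in \eqref{gmres0}, and recall $A\mathcal{P}^{-1}$ is self-adjoint and positive for $\langle\cdot,\cdot\rangle_{\mathcal{P}^{-1}}$ — so $\mathcal{E}_k=\mathcal{P}A^{-1}R_k$ and, cancelling the factors $\mathcal{P}^{-1}$ and $A$ (legitimate since $A,\mathcal{P}\succ0$),
\[
\|\mathcal{E}_k\|_{\mathcal{P}^{-1}A\mathcal{P}^{-1}}^{2}=\mathrm{Trace}\!\left(\mathcal{E}_k^{T}\mathcal{P}^{-1}A\mathcal{P}^{-1}\mathcal{E}_k\right)=\mathrm{Trace}\!\left(R_k^{T}A^{-1}R_k\right).
\]

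By the first relation in \eqref{gmres0}, $\mathcal{Y}_k-\mathcal{Y}_0\in\mathcal{K}_k(A\mathcal{P}^{-1},R_0)$, so $R_k$ is a polynomial of degree at most $k$ in $A\mathcal{P}^{-1}$ evaluated at $R_0$ with constant term $1$; equivalently $R_k=\sum_{i=1}^{k+1}c_i(A\mathcal{P}^{-1})^{i-1}R_0=K_{k+1}c$ for some $c\in\mathbb{R}^{k+1}$ with $e_1^{T}c=1$, and conversely every such $c$ is the coordinate vector of the residual of an admissible iterate $\mathcal{Y}_0+\mathcal{Z}$, $\mathcal{Z}\in\mathcal{K}_k(A\mathcal{P}^{-1},R_0)$. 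Expanding the trace blockwise over the $n_u\times3$ columns of $K_{k+1}$ and invoking Definition~\ref{Diamond} turns the right-hand side of the last display into $c^{T}\!\left(K_{k+1}^{T}\diamond A^{-1}K_{k+1}\right)c$. Next I would use the orthogonality relation in \eqref{gmres0}, $R_k\perp_{\langle\cdot,\cdot\rangle_{\mathcal{P}^{-1}}}\mathcal{K}_k(A\mathcal{P}^{-1},R_0)$, to show that $\mathcal{E}_k$ minimises $\|\cdot\|_{\mathcal{P}^{-1}A\mathcal{P}^{-1}}$ over all admissible errors: every admissible iterate has error of the form $\mathcal{E}_k+\mathcal{W}$ with $\mathcal{W}\in\mathcal{K}_k(A\mathcal{P}^{-1},R_0)$, and the cross term $\langle\mathcal{E}_k,\mathcal{W}\rangle_{\mathcal{P}^{-1}A\mathcal{P}^{-1}}=\mathrm{Trace}(\mathcal{W}^{T}\mathcal{P}^{-1}R_k)=\langle R_k,\mathcal{W}\rangle_{\mathcal{P}^{-1}}$ vanishes, so $\|\mathcal{E}_k+\mathcal{W}\|^{2}=\|\mathcal{E}_k\|^{2}+\|\mathcal{W}\|^{2}$. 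Therefore, with $M:=K_{k+1}^{T}\diamond A^{-1}K_{k+1}$,
\[
\|\mathcal{E}_k\|_{\mathcal{P}^{-1}A\mathcal{P}^{-1}}^{2}=\min\left\{\,c^{T}M c \;:\; c\in\mathbb{R}^{k+1},\ e_1^{T}c=1\,\right\}.
\]
Since $A\succ0$ the matrix $M$ is positive semidefinite, and being nonsingular by hypothesis it is symmetric positive definite; a Lagrange-multiplier computation then gives $\min\{c^{T}Mc:e_1^{T}c=1\}=1/(e_1^{T}M^{-1}e_1)$, attained at $c=M^{-1}e_1/(e_1^{T}M^{-1}e_1)$, which is precisely \eqref{errorNorm}.

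The step I expect to be the main obstacle is the algebraic bookkeeping that makes the first display come out clean: one has to keep the preconditioner $\mathcal{P}$, the matrix $A$, and the Kronecker factor $I_3$ in the right positions so that the $\mathcal{P}^{-1}A\mathcal{P}^{-1}$-weighted norm of the error collapses exactly onto the $A^{-1}$-weighted Frobenius norm of $R_k$, and one must verify that the diamond product of Definition~\ref{Diamond} really does linearise the blockwise trace over the columns of $K_{k+1}$. The two nonsingularity hypotheses play complementary roles: nonsingularity of $K_k^{T}\diamond\mathcal{P}^{-1}A\mathcal{P}^{-1}K_k$ guarantees that the Galerkin conditions of Theorem~\ref{PGCG} have a unique solution, so that the $\mathcal{P}$GCG iterate at step $k$ is well defined, while nonsingularity of $K_{k+1}^{T}\diamond A^{-1}K_{k+1}$ is exactly what makes the right-hand side of \eqref{errorNorm} meaningful.
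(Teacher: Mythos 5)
Your proof is correct and reaches \eqref{errorNorm}, but the closing step is organized differently from the paper's. Both arguments begin identically: Theorem~\ref{PGCG} is used to expand $\mathcal{Y}_k-\mathcal{Y}_0$ (equivalently the residual) in the global Krylov blocks $(A\mathcal{P}^{-1})^{i-1}R_0$, and the identity $R_k=A\mathcal{P}^{-1}\mathcal{E}_k$ converts the $\mathcal{P}^{-1}A\mathcal{P}^{-1}$-norm of the error into the $A^{-1}$-weighted Frobenius norm of the residual. Where you then invoke the Galerkin optimality of $\mathcal{E}_k$ and evaluate $\min\{c^{T}Mc\,:\,e_1^{T}c=1\}=1/(e_1^{T}M^{-1}e_1)$ with $M=K_{k+1}^{T}\diamond A^{-1}K_{k+1}$ by a Lagrange-multiplier computation, the paper instead solves the normal equations explicitly for the coefficient vector $\psi$ (this is where the nonsingularity of $K_{k}^{T}\diamond\mathcal{P}^{-1}A\mathcal{P}^{-1}K_{k}$ enters), substitutes it back into $\|\mathcal{E}_{k}\|_{\mathcal{P}^{-1}A\mathcal{P}^{-1}}^{2}$, recognizes the resulting expression as the Schur complement of the block $K_{k}^{T}\diamond\mathcal{P}^{-1}A\mathcal{P}^{-1}K_{k}$ inside $K_{k+1}^{T}\diamond A^{-1}K_{k+1}$, and reads off the reciprocal of $e_1^{T}\left(K_{k+1}^{T}\diamond A^{-1}K_{k+1}\right)^{-1}e_1$ from the block-triangular factorization \eqref{decom}. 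The two mechanisms are algebraically equivalent (the constrained-minimum formula is the Schur-complement identity in disguise), but your variational route avoids computing $\psi$ altogether and makes the role of the orthogonality condition more transparent, while the paper's computation makes explicit why the trailing principal submatrix must be invertible --- in your version that hypothesis is relegated, as you correctly note, to well-posedness of the iterate itself. The only cosmetic caveat is that $K_{k+1}c$ should strictly be read as $K_{k+1}(c\otimes I_{3})$, the same abuse of notation the paper commits when writing $K_{k}(\psi\otimes I_{3})$.
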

\begin{proof}
Using Theorem~\ref{PGCG}, we can express the error vector $\mathcal{E}_k$ as a combination of global Krylov matrices, specifically the matrices  $\left(A\mathcal{P}^{-1}\right)^{i-1}R_0$, $i=1,...,k$. This can be written as a follows :
\begin{eqnarray}
\label{errorV}
\mathcal{E}_k=\mathcal{Y}^{*}-\mathcal{Y}_{k},
\end{eqnarray}
where $\mathcal{Y}^{*}$ is the exact solution of~(\ref{PSMR}). (\ref{errorV}) can be rewritten as follows
\begin{eqnarray}
\label{errorVrr2} 
\mathcal{E}_k&=&\mathcal{Y}^{*}-\mathcal{Y}_{k}, \nonumber \\
  &=&\mathcal{Y}^{*}-\mathcal{Y}_{0}+ \mathcal{Y}_{0}-\mathcal{Y}_{k},  \nonumber \\
  &=& \mathcal{E}_{0}-\left(\mathcal{Y}_{k}-\mathcal{Y}_{0}\right). 
\end{eqnarray}
 \text{By using Theorem}~\ref{PGCG}, we get that 
 $\left(\mathcal{Y}_{k}-\mathcal{Y}_{0}\right)\in \mathcal{K}_{k}
(A\mathcal{P}^{-1},R_{0})$, additionally, based on  Remark $3.1$, we obtain that 
\begin{eqnarray}
\label{errorVrrr1} 
  \mathcal{Y}_{k}- \mathcal{Y}_{0}&=& \sum_{i=1}^{k}\psi_{i}\left(A\mathcal{P}^{-1}\right)^{i-1}R_{0},\hspace{0.2cm} where \hspace{0.2cm} (\psi_{1},...,\psi_{k})\in\mathbb{R}^{k}.
\end{eqnarray}
Substituting equation (\ref{errorVrrr1}) into (\ref{errorVrr2}), results the following expression
\begin{eqnarray}
\label{errorVrrr} 
  \mathcal{E}_{k}&=& \mathcal{E}_{0}-\sum_{i=1}^{k}\psi_{i}\left(A\mathcal{P}^{-1}\right)^{i-1}R_{0},\nonumber
\end{eqnarray}
as we know that  $R_{k}=A\mathcal{P}^{-1}\mathcal{E}_{k}$, then we can express the residual matrix $R_{k}$  as follows
\begin{eqnarray}
\label{residue}
    R_{k}= R_{0}-A\mathcal{P}^{-1}\sum_{i=1}^{k}\psi_{i}\left(A\mathcal{P}^{-1}\right)^{i-1}R_{0},
\end{eqnarray}
by using  orthogonality condition in Theorem~\ref{PGCG}, we obtain 
\begin{eqnarray}
\label{Ortho}
\langle R_{k},\left(A\mathcal{P}^{-1}\right)^{i-1} R_{0}\rangle_{\mathcal{P}^{-1}}&=&0,\nonumber
\end{eqnarray}
using  (\ref{residue}), we obtain the following equality 
\begin{eqnarray}
\langle R_{0},\left(A\mathcal{P}^{-1}\right)^{i-1} R_{0}\rangle_{\mathcal{P}^{-1}}&=&\langle A\mathcal{P}^{-1}\sum_{i=1}^{k}\psi_{i}\left(A\mathcal{P}^{-1}\right)^{i-1}R_{0},\left(A\mathcal{P}^{-1}\right)^{i-1} R_{0}\rangle_{\mathcal{P}^{-1}}.\nonumber
\end{eqnarray}
By utilizing  the definition of  ${\langle . , .\rangle}_{\mathcal{P}^{-1}}$  and employing   the expression of the  matrix  $K_{k}$, we obtain  the following expression
\begin{eqnarray}
K_{k}^{T}\diamond\mathcal{P}^{-1}R_{0}&=&K_{k}^{T}\diamond\mathcal{P}^{-1}A\mathcal{P}^{-1}K_{k}\left(\psi\otimes I_{3}\right),  \text{where}\hspace{0.1cm} \psi=\left(\psi_{1},..,\psi_{k}\right),  \nonumber \\
&=& \left(K_{k}^{T}\diamond\mathcal{P}^{-1}A\mathcal{P}^{-1}K_{k}\right)\psi.
\end{eqnarray}
If the matrix    
$K_{k}^{T}\diamond\mathcal{P}^{-1}A\mathcal{P}^{-1}K_{k}$ is nonsingular, then 
\begin{eqnarray}
\label{psi}
\psi=\left(K_{k}^{T}\diamond\mathcal{P}^{-1}A\mathcal{P}^{-1}K_{k}\right)^{-1}K_{k}^{T}\diamond\mathcal{P}^{-1}R_{0}.
\end{eqnarray}
Due to the orthogonality conditions, it can be deduced from Theorem~\ref{PGCG} and relation (\ref{residue}), given that
\begin{eqnarray}
\langle \left(A\mathcal{P}^{-1}\right)\mathcal{E}_{k},\mathcal{E}_{k}\rangle_{\mathcal{P}^{-1}}&=&
\langle {R}_{k},\mathcal{E}_{0}\rangle_{\mathcal{P}^{-1}}.\nonumber 
\end{eqnarray}
Using ${\langle . , .\rangle}_{\mathcal{P}^{-1}}$(\ref{DiamondP})
and  (\ref{residue}), we deduce that 
\begin{eqnarray}
\label{epsilon}
\|\mathcal{E}_{k}\|_{\mathcal{P}^{-1}A\mathcal{P}^{-1}}^{2}&=& \mathcal{E}_{0}^{T}\diamond\mathcal{P}^{-1}R_{0}-\left(\mathcal{E}_{0}^{T}\diamond\mathcal{P}^{-1}A\mathcal{P}^{-1}K_{k}\right)\psi,
\end{eqnarray}
by substituting the expression of $\psi$ $(\ref{psi})$ in the right-hand side of $(\ref{epsilon})$, we obtain the following expression:
\begin{eqnarray}
\label{epsilon1}
\|\mathcal{E}_{k}\|_{\mathcal{P}^{-1}A\mathcal{P}^{-1}}^{2}=\mathcal{E}_{0}^{T}\diamond\mathcal{P}^{-1}R_{0}-\left(\mathcal{E}_{0}^{T}\diamond\mathcal{P}^{-1}A\mathcal{P}^{-1}K_{k}\right)\left(K_{k}^{T}\diamond\mathcal{P}^{-1}A\mathcal{P}^{-1}K_{k}\right)^{-1}\left(K_{k}^{T}\diamond\mathcal{P}^{-1}R_{0} \right),
\end{eqnarray}
we observe that the right-hand side of (\ref{epsilon1})  is a Schur complement for the matrix
\begin{eqnarray}
\label{Schur}
K_{k+1}^{T}\diamond A^{-1}K_{k+1}=\left(\begin{array}{cc}
\mathcal{E}_{0}^{T}\diamond\mathcal{P}^{-1}R_{0}& \mathcal{E}_{0}^{T}\diamond\mathcal{P}^{-1}A\mathcal{P}^{-1}K_{k}\\ K_{k}^{T}\diamond\mathcal{P}^{-1}A\mathcal{P}^{-1}\mathcal{E}_{0}&
K_{k}^{T}\diamond\mathcal{P}^{-1}A\mathcal{P}^{-1}K_{k}
\end{array} \right).
 \end{eqnarray}
The  matrix (\ref{Schur}) has the following block-triangular factorization
 \begin{eqnarray}
 \label{decom}
\left[ \begin{array}{cc}
1& \left(\mathcal{E}_{0}^{T}\diamond\mathcal{P}^{-1}A\mathcal{P}^{-1}K_{k}\right)(K_{k}^{T}\diamond\mathcal{P}^{-1}A\mathcal{P}^{-1}K_{k})^{-1}\\ 0&
I
\end{array} \right]\left[\begin{array}{cc}
\|\mathcal{E}_{k}\|_{\mathcal{P}^{-1}A\mathcal{P}^{-1}}^{2}& 0\\ K_{k}^{T}\diamond\mathcal{P}^{-1}A\mathcal{P}^{-1}\mathcal{E}_{0}&
K_{k}^{T}\diamond\mathcal{P}^{-1}A\mathcal{P}^{-1}K_{k}
\end{array}\right],
 \end{eqnarray}
 using the hypothesis that $K_{k+1}^{T}\diamond A^{-1}K_{k+1}$  and $K_{k}^{T}\diamond\mathcal{P}^{-1}A\mathcal{P}^{-1}K_{k}$ are nonsingular and by using the inverse of the decomposition (\ref{decom}), the following result
holds:
\begin{eqnarray}
\label{errorNorm3}
\|\mathcal{E}_{k}\|_{\mathcal{P}^{-1}A\mathcal{P}^{-1}}^{2}&=&\frac{1}{\displaystyle{e_1^T\left(K_{k+1}^{T}\diamond A^{-1}K_{k+1}\right)^{-1} e_1}}.
\end{eqnarray}
\end{proof}
The matrix  $A$ is symmetric positive definite and  can be decomposed using the Cholesky decomposition as  $A=LL^{T}$. When considering the expression $L^{-1}(A\mathcal{P}^{-1})L$, which involves the inverse of $L$, we observe that this expression is symmetric. Consequently, the spectral decomposition of $L^{-1}(A\mathcal{P}^{-1})L$ can be determined as follows
$L^{-1}\left(A\mathcal{P}^{-1}\right)L=\mathcal{V}\Lambda\mathcal{V}^{T}$, where $\Lambda$ is diagonal matrix whose elements are the eigenvalues $\lambda_{1},..,\lambda_{n_u}$ and $\mathcal{V}$ is the eigenvector matrix. We will give a more sharp upper bound for the residual and error norms.
\begin{theorem}
Let the initial residual  $R_0$   be decomposed as $R_0=L\mathcal{V} \gamma$
	where $\gamma^{(j)}$ is  vector with components $\gamma_{1}^{(j)}$,.., $\gamma_{n_u}^{(j)}$, with $j=1,..,3$.
 In this case, we can express it as follows:
	\begin{eqnarray}\label{conv1}
	&(1)&\hspace{0.1cm}\|\mathcal{E}_k\|_{\mathcal{P}^{-1}A\mathcal{P}^{-1}}^{2} =\displaystyle \frac{1}{e_1^T(V_{k+1}^T
		\widetilde D V_{k+1})^{-1} e_1},\nonumber\\
	&(2)&\hspace{0.1cm}\|R_k\|_{\mathcal{P}^{-1}}^{2} \leq\displaystyle \frac{1}{e_1^T(V_{k+1}^T
		\widetilde D V_{k+1})^{-1} e_1}\left(\frac{\tilde{\theta}^{k+1}}{\theta^{k}}\right),\nonumber
	\end{eqnarray}
	where
	\begin{equation}\label{krylov3}
\widetilde{D}=\text{diag}\left(\sum_{j=1}^{3}|\gamma_{1}^{(j)}|^{2},..., \sum_{j=1}^{3}|\gamma_{n_{u}}^{(j)}|^{2}\right)
	\quad {\rm and} \quad
	V_{k+1}=\left(
	\begin{array}{cccc}
	1&\lambda_1&\ldots&\lambda_1^{k}\\
	\vdots&\vdots&&\vdots\\
	1&\lambda_{n_u}&\ldots&\lambda_{n_u}^{k}\\
	\end{array}
	\right),
	\end{equation}
	$e_1$ is the first unit
	vector of $\mathbb{R}^{k+1}$,  $\widetilde{\theta}$ and $\theta$ are the maximum and the minimum eigenvalues  respectively of the tridiagonal  matrices  $\hat{T}_{k+1}=\left(U_{k+1}^{T}\diamond A^{-1}U_{k+1}\right)^{-1}$ and $T_{k}=U_{k}^{T}\diamond\mathcal{P}^{-1}A\mathcal{P}^{-1}U_{k}$, where $U_{k}^{T}\diamond \mathcal{P}^{-1}U_{k}=I_{k}$,
$$
 T_{k}=\begin{bmatrix}
   \alpha_{1} & \eta_{2}  \\ 
  \eta_{2} & \alpha_{2} & \eta_{3}  \\ 
   &  \ddots&  \ddots & \ddots\\ 
   &  & \eta_{k-1}  & \alpha_{k-1}  &\eta_{k}
                       \\
     &  &   & \eta_{k}  &\alpha_{k}
 \end{bmatrix},
$$
and 
$$
\hat{T}_{k+1}=
 \begin{bmatrix}
   \alpha_{1} & \eta_{2}  \\ 
  \eta_{2}& {\alpha}_{2} & {\eta}_{3}  \\ 
   &  \ddots&  \ddots & \ddots\\ 
   &  & {\eta}_{k}  & {\alpha}_{k}  &{\eta}_{k+1}
                       \\
     &  &   & {\eta}_{k+1}  &\hat{\eta}_{k+1}\\
 \end{bmatrix},
$$
 obtained by Lanczos.
\end{theorem}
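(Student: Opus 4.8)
The statement splits into two essentially independent parts, and the plan is to prove $(1)$ by a direct algebraic reduction of the formula already established in the previous theorem, and $(2)$ by a determinantal computation built on top of it.

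\textbf{Part (1).} I would start from $\|\mathcal{E}_{k}\|_{\mathcal{P}^{-1}A\mathcal{P}^{-1}}^{2}=1/\bigl(e_1^{T}(K_{k+1}^{T}\diamond A^{-1}K_{k+1})^{-1}e_1\bigr)$ and show that $K_{k+1}^{T}\diamond A^{-1}K_{k+1}=V_{k+1}^{T}\widetilde{D}V_{k+1}$. Using $A=LL^{T}$ together with the spectral factorization $L^{-1}(A\mathcal{P}^{-1})L=\mathcal{V}\Lambda\mathcal{V}^{T}$ (with $\mathcal{V}^{T}\mathcal{V}=I$), one has $(A\mathcal{P}^{-1})^{i-1}=L\mathcal{V}\Lambda^{i-1}\mathcal{V}^{T}L^{-1}$, hence, after inserting $R_{0}=L\mathcal{V}\gamma$ and cancelling $\mathcal{V}^{T}\mathcal{V}$, the $i$-th block of $K_{k+1}$ is $(A\mathcal{P}^{-1})^{i-1}R_{0}=L\mathcal{V}\Lambda^{i-1}\gamma$. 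The $(i,l)$ entry of $K_{k+1}^{T}\diamond A^{-1}K_{k+1}$ is then $\mathrm{Trace}\bigl(\gamma^{T}\Lambda^{i-1}\mathcal{V}^{T}L^{T}A^{-1}L\mathcal{V}\Lambda^{l-1}\gamma\bigr)$, and the cancellation $L^{T}A^{-1}L=I$ (immediate from $A=LL^{T}$) together with $\mathcal{V}^{T}\mathcal{V}=I$ reduces it to $\mathrm{Trace}(\gamma^{T}\Lambda^{i+l-2}\gamma)=\sum_{m=1}^{n_{u}}\lambda_{m}^{\,i+l-2}\sum_{j=1}^{3}|\gamma_{m}^{(j)}|^{2}=(V_{k+1}^{T}\widetilde{D}V_{k+1})_{il}$. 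Substituting this into the previous theorem yields $(1)$.

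\textbf{Part (2).} The plan is to convert both Krylov--Gram matrices into the Lanczos matrices through the global factorization $K_{j}=U_{j}(\mathcal{R}_{j}\otimes I_{3})$ with $\mathcal{R}_{j}$ upper triangular, $\mathcal{R}_{k}$ the leading block of $\mathcal{R}_{k+1}$, and $U_{j}^{T}\diamond\mathcal{P}^{-1}U_{j}=I_{j}$. The rule $(Y(\mathcal{R}\otimes I_{3}))^{T}\diamond M(Z(\mathcal{S}\otimes I_{3}))=\mathcal{R}^{T}(Y^{T}\diamond MZ)\mathcal{S}$ gives $K_{k+1}^{T}\diamond A^{-1}K_{k+1}=\mathcal{R}_{k+1}^{T}\hat{T}_{k+1}^{-1}\mathcal{R}_{k+1}$ and $K_{k}^{T}\diamond\mathcal{P}^{-1}A\mathcal{P}^{-1}K_{k}=\mathcal{R}_{k}^{T}T_{k}\mathcal{R}_{k}$, and since the previous theorem already identifies the trailing $k\times k$ block of $K_{k+1}^{T}\diamond A^{-1}K_{k+1}$ with $K_{k}^{T}\diamond\mathcal{P}^{-1}A\mathcal{P}^{-1}K_{k}$, the formula for $\|\mathcal{E}_{k}\|_{\mathcal{P}^{-1}A\mathcal{P}^{-1}}^{2}$ becomes the determinant ratio $\det(K_{k+1}^{T}\diamond A^{-1}K_{k+1})/\det(K_{k}^{T}\diamond\mathcal{P}^{-1}A\mathcal{P}^{-1}K_{k})=\rho_{k+1}^{2}/(\det\hat{T}_{k+1}\det T_{k})$, where $\rho_{k+1}$ is the last diagonal entry of $\mathcal{R}_{k+1}$. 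For the residual, $R_{k}\in\mathcal{K}_{k+1}(A\mathcal{P}^{-1},R_{0})$ with coefficient $1$ on $R_{0}$ (because $R_{k}=R_{0}-A\mathcal{P}^{-1}(\mathcal{Y}_{k}-\mathcal{Y}_{0})$) and $R_{k}\perp_{\mathcal{P}^{-1}}\mathcal{K}_{k}(A\mathcal{P}^{-1},R_{0})$; expressing the Galerkin conditions against $M:=K_{k+1}^{T}\diamond\mathcal{P}^{-1}K_{k+1}=\mathcal{R}_{k+1}^{T}\mathcal{R}_{k+1}$ and using Cramer's rule gives $\|R_{k}\|_{\mathcal{P}^{-1}}^{2}=1/(\mathcal{R}_{k+1}^{-1})_{1,k+1}^{2}$. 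The Lanczos three-term recurrence, applied column by column to $K_{k+1}=U_{k+1}(\mathcal{R}_{k+1}\otimes I_{3})$, forces the submatrix of $\mathcal{R}_{k+1}$ in rows $1,\dots,k$ and columns $2,\dots,k+1$ to be exactly $T_{k}\mathcal{R}_{k}$, so the $(1,k+1)$ cofactor yields $(\mathcal{R}_{k+1}^{-1})_{1,k+1}=\pm\det T_{k}/\rho_{k+1}$ and hence $\|R_{k}\|_{\mathcal{P}^{-1}}^{2}=\rho_{k+1}^{2}/(\det T_{k})^{2}$. Dividing, the $\rho_{k+1}$ factors cancel, $\|R_{k}\|_{\mathcal{P}^{-1}}^{2}=\|\mathcal{E}_{k}\|_{\mathcal{P}^{-1}A\mathcal{P}^{-1}}^{2}\,\bigl(\det\hat{T}_{k+1}/\det T_{k}\bigr)$, and since $\hat{T}_{k+1},T_{k}\succ 0$ the bounds $\det\hat{T}_{k+1}\le\widetilde{\theta}^{\,k+1}$ and $\det T_{k}\ge\theta^{\,k}$ produce the factor $\widetilde{\theta}^{\,k+1}/\theta^{\,k}$; the equality labelled $(1)$ in the display is simply $(1)$ rewritten through $K_{k+1}^{T}\diamond A^{-1}K_{k+1}=V_{k+1}^{T}\widetilde{D}V_{k+1}$.

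\textbf{Main obstacle.} Part $(1)$ is a one-line substitution once the cancellations $L^{T}A^{-1}L=I$ and $\mathcal{V}^{T}\mathcal{V}=I$ are in place. The genuinely delicate step is the residual identity $\|R_{k}\|_{\mathcal{P}^{-1}}^{2}=\rho_{k+1}^{2}/(\det T_{k})^{2}$ in part $(2)$: the residual is fixed by a Galerkin/orthogonality condition, not by a minimization, so its norm is not simply the reciprocal of a diagonal entry of an inverse Gram matrix the way the error norm is, and extracting it forces one to identify the $(1,k+1)$ cofactor of $\mathcal{R}_{k+1}$ with $\det T_{k}$. That identification relies on the nesting of the global QR factors (with $\mathcal{R}_{k}$ the leading block of $\mathcal{R}_{k+1}$) and on tracking precisely how the Lanczos coefficients $\alpha_{j},\eta_{j}$ carry one column of $\mathcal{R}_{k+1}$ to the next; once it is settled, the cancellation of the $\rho_{k+1}$ and $\mathcal{R}$ factors and the spectral bounds are routine.
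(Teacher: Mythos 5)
Your proof is correct, and part (1) is essentially identical to the paper's argument: both reduce $K_{k+1}^{T}\diamond A^{-1}K_{k+1}$ to $V_{k+1}^{T}\widetilde{D}V_{k+1}$ by writing $L^{-1}(A\mathcal{P}^{-1})^{i-1}R_0=\mathcal{V}\Lambda^{i-1}\gamma$ and exploiting $L^{T}A^{-1}L=I$ (the paper phrases this columnwise as $L^{-1}K_{j,k+1}=\mathcal{V}D_{\gamma^{(j)}}V_{k+1}$ and sums over $j$, which is the same computation). Where you genuinely diverge is part (2). The paper does not derive the determinantal identity at all: it quotes from an earlier reference the formula $\|R_{k}\|_{\mathcal{P}^{-1}}^{2}=\|\mathcal{E}_{k}\|_{\mathcal{P}^{-1}A\mathcal{P}^{-1}}^{2}\,\det(K_{k}^{T}\diamond\mathcal{P}^{-1}K_{k})\det(K_{k+1}^{T}\diamond\mathcal{P}^{-1}K_{k+1})\big/\bigl(\det(K_{k}^{T}\diamond\mathcal{P}^{-1}A\mathcal{P}^{-1}K_{k})\det(K_{k+1}^{T}\diamond A^{-1}K_{k+1})\bigr)$ and then substitutes the global QR factorization $K_{k}=U_{k}E_{k}$ so that everything collapses to $\det(\hat{T}_{k+1})/\det(T_{k})$. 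You instead rebuild that identity from first principles: you get $\|\mathcal{E}_{k}\|^{2}=\rho_{k+1}^{2}/(\det\hat{T}_{k+1}\det T_{k})$ from the Schur-complement/cofactor form of the previous theorem, and $\|R_{k}\|_{\mathcal{P}^{-1}}^{2}=\rho_{k+1}^{2}/(\det T_{k})^{2}$ from the Galerkin characterization of $R_{k}$ in $\mathcal{K}_{k+1}$ (coefficient $1$ on $R_{0}$, $\mathcal{P}^{-1}$-orthogonality to $\mathcal{K}_{k}$) together with the identification of the $(1,k+1)$ cofactor of $\mathcal{R}_{k+1}$ with $\det T_{k}\det\mathcal{R}_{k}$ via the Lanczos relation $A\mathcal{P}^{-1}K_{k}=U_{k+1}(\bar{T}_{k}\mathcal{R}_{k}\otimes I_{3})$. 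Both routes meet at $\|R_{k}\|_{\mathcal{P}^{-1}}^{2}=\|\mathcal{E}_{k}\|_{\mathcal{P}^{-1}A\mathcal{P}^{-1}}^{2}\det(\hat{T}_{k+1})/\det(T_{k})$ and finish with the same eigenvalue bounds $\det\hat{T}_{k+1}\le\widetilde{\theta}^{\,k+1}$, $\det T_{k}\ge\theta^{\,k}$. What your version buys is self-containedness — the reader need not chase the cited determinant representation — at the cost of the more delicate cofactor bookkeeping you correctly flag as the main obstacle; the paper's version is shorter but leans on an external lemma.
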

\begin{proof}
	$L^{-1}K_{j,k+1}$ can be written as
	\begin{equation}\label{krylov1}
	L^{-1}K_{j,k+1}=[\mathcal{V}\gamma^{(j)},\mathcal{V}\,\Lambda\gamma^{(j)},\ldots,\mathcal{V}\Lambda^{k}\gamma^{(j)}],\quad   \nonumber
	\end{equation}
	\noindent which is equivalent to
	\begin{equation}\label{krylov2}
L^{-1}K_{j,k+1}=\mathcal{V}D_{\gamma^{(j)}}V_{k+1},
	\end{equation}
	where $D_{\gamma^{(j)}}$ 
	\begin{equation}
	D_{\gamma^{(j)}}=\left(
	\begin{array}{cccc}
	{\gamma_1^{(j)}}&&&\\
	&\ddots&&\\
	&&&{\gamma_{n_u}^{(j)}}
	\end{array}
	\right). \nonumber
	\end{equation}
	From (\ref{krylov2}), it
	follows that
\begin{equation}
\label{krylov4}
(L^{-1}K_{k+1})^T\diamond(L^{-1}K_{k+1})=\sum_{j=1}^{3}(L^{-1}K_{j,k+1})^T(L^{-1}K_{j,k+1})=V_{k+1}^T\sum_{j=1}^{3}\,D_{\gamma^{(j)}}^T\,\,D_{\gamma^{(j)}}V_{k+1}.
\end{equation}
	Using  (\ref{errorNorm}), (\ref{krylov4}) and  the fact that $\widetilde D=\sum_{j=1}^{3}
	D_{\gamma^{(j)}}^T\,D_{\gamma^{(j)}}$, we obtain the following equality 
	\begin{equation}
	\label{2l}
	\|\mathcal{E}_k\|_{\mathcal{P}^{-1}A\mathcal{P}^{-1}}^{2} =\displaystyle \frac{1}{e_1^T(V_{k+1}^T
		\widetilde D V_{k+1})^{-1} e_1}.
	\end{equation}
Now, we will proceed to prove statement $(2)$. Utilizing equation  (\ref{residueNrom}) and the representation of the residue as a function of the determinant in \cite{badahmane1}, we can establish the following equality:
	\begin{eqnarray}
	\label{2zw}
\|R_{k}\|_{{\mathcal{P}^{-1}}}^{2}&=&\displaystyle \|\mathcal{E}_{k}\|_{\mathcal{P}^{-1}A\mathcal{P}^{-1}}^{2}\frac{\displaystyle{\text{det}\left(K_{k}^{T}\diamond\mathcal{P}^{-1}K_{k}\right)\text{det}\left(K_{k+1}^{T}\diamond\mathcal{P}^{-1}K_{k+1}\right)}}{\displaystyle{\text{det}\left(K_{k}^{T}\diamond\mathcal{P}^{-1}A\mathcal{P}^{-1}K_{k}\right)}\text{det}\left(K_{k+1}^{T}\diamond A^{-1}K_{k+1}\right)},
	\end{eqnarray}
	by substituting the QR factorization  of $K_k$ as $K_k=U_{k}{E}_{k}$ in  (\ref{2zw}), we derive the following  result:
	\begin{eqnarray}	
 \|R_{k}\|_{\mathcal{P}^{-1}}^{2} &=&\displaystyle \|\mathcal{E}_{k}\|_{\mathcal{P}^{-1}A\mathcal{P}^{-1}}^{2} \frac{\text{det}(\hat{T}_{k+1})}{\text{det}(T_{k})},\nonumber
	\end{eqnarray}
	where  $T_{k}=U_{k}^{T}\diamond\mathcal{P}^{-1}A\mathcal{P}^{-1}U_{k}$ and $\hat{T}_{k+1}=(U_{k+1}^{T}\diamond A^{-1}U_{k+1})^{-1}$.
	%Finally, 
	From (\ref{2l}), we obtain 
	\begin{eqnarray}
	\label{3t}
	\|R_{k}\|_{\mathcal{P}^{-1}}^{2} &=&\displaystyle \frac{1}{e_1^T(V_{k+1}^T
		\widetilde D V_{k+1})^{-1} e_1} \frac{\text{det}(\hat{T}_{k+1})}{\text{det}(T_{k})},
	\end{eqnarray}
	 it follows from  $(\ref{3t})$ that 
	\begin{equation}
	\|R_k\|_{\mathcal{P}^{-1}}^{2} \leq \displaystyle \frac{1}{e_1^T(V_{k+1}^T
		\widetilde D V_{k+1})^{-1} e_1}\left(\frac{\tilde{\theta}^{k+1}}{\theta^{k}}\right),
	\end{equation}
	where $\widetilde\theta$ and $\theta$ are the maximal and the minimal eigenvalues  respectively of  $\hat{T}_{k+1}$ and ${T}_{k}$.
\end{proof}

\section{Numerical results}\label{Numerical}
In this section, we present the outcomes of numerical experiments demonstrating the convergence behavior of the $\mathcal{P}_{Gr}$GMRES and $\mathcal{P}_{Gr}\mathcal{F}$GMRES  methods.  All the computations  were performed on a computer equipped with an a 64-bit 2.49 GHz core i5 processor and 8.00 GB RAM
using MATLAB.R2017.a. The initial iteration ${x}_{0}$  is set to be a zero vector.
%and the right-hand side vector
%$d$ is chosen such that the exact solution of the saddle-point problem (\ref{saddle}) is the vector
%which has all its components equal to one.
The parameters $\beta$ is  chosen for testing the $\mathcal{P}_{r}$ and  $\mathcal{P}_{Gr}$ preconditioners, is set to   $\beta^{*}$  as found in previous studies~\cite{badahmane1}.
In  the numerical experiments: 
\begin{itemize}
    \item  $\mathrm{IT}$: number of iterations of the $\mathcal{P}\mathcal{R}$GMRES  or $\mathcal{F}$GMRES methods,
    \item  $\mathrm{RES}$: norm of absolute residual vector and  defined as follows
    \begin{eqnarray}
    \mathrm{RES}=\|d-\mathcal{A}{x}_{k}\|_{2},  \nonumber
    \end{eqnarray}
    where $x_{k}$ is the computed kth approximate solution.
    \item $\mathrm{RRES}$ denote the norm of absolute relative residual is defined as :
       \begin{eqnarray}
    \mathrm{RRES} =\frac{\|d-\mathcal{A}{x}_{k}\|_{2}}{\|d\|_{2}}.\nonumber
     \end{eqnarray}
\end{itemize}
\begin{exe}
\label{ex3}
We  consider a test problem derived from a benchmark problem described in \cite{koko}. The inflow and outflow conditions  are
\[
\begin{aligned}
u_1&= 0.3 \times 0.412 \times 4y(0.41 - y), \quad u_2 = 0 \quad \text{on} \quad \Gamma_{\text{in}} = \{0\} \times (0, 0.41), \\
u_1&= 0.3 \times 0.412 \times 4y(0.41 - y), \quad u_2 = 0 \quad \text{on} \quad \Gamma_{\text{out}} = \{2.2\} \times (0, 0.41).
\end{aligned}
\]
On the other parts of the boundary of \(\Omega\), homogeneous boundary conditions are prescribed (i.e., \(u = 0\)). The domain is discretized by $P_{1}-$Bubble$/P_1$, where
\begin{itemize}
    \item  $P_{1}-$Bubble: Lagrange element that is enhanced by the inclusion of a cubic bubble function,
    \item $\mathrm{P}_{1}$: Lagrange element  for the pressure,
\end{itemize}
the nodal positions of this mixed finite element  are illustrated in the following Fig.~\ref{nodal}:
\begin{figure}[H]
\tikzstyle{quadri}=[circle,draw,fill=black,text=white]
\tikzstyle{quadri2}=[circle,draw,fill=white,text=black]
\begin{center}
\begin{tikzpicture}
	\draw[->] (0,0) -- (3,0) node[right] {$\mathcal{L}_{1}$};
	\draw[->] (0,0) -- (0,3) node[above] {$\mathcal{L}_{2}$};
\end{tikzpicture}
\hspace{1cm}
\begin{tikzpicture}
%\node[quadri] (E) at(0,4) {};
\node[quadri] (A) at(-2,4) {};
%\node[quadri] (Z) at(-4,4) {};
\node[quadri] (W) at(-4,1){};
%\node[quadri] (P) at(-2,1){};
\node[quadri] (O) at(0,1){};
%\node[quadri2] (H) at(-2,3.2){};
%\node[quadri2] (K) at(-1.1,1.8){};
%\node[quadri2] (Y) at(-2.7,1.8){};
%\node[quadri] (X) at(0,2.5){};
\node[quadri] (I) at(-2,2){};
%\node[quadri] (J) at(-4,2.5){};
%\draw[-,=latex] (E)--(Z);
%\draw[-,=latex] (Z)--(W);
\draw[-,=latex] (W)--(O);
\draw[-,=latex] (O)--(A);
\draw[-,=latex] (W)--(A);
\end{tikzpicture}
\end{center}
%\tikzstyle{quadri}=[circle,draw,fill=black,text=white]
\tikzstyle{quadri2}=[circle,draw,fill=white,text=white]
\begin{center}
\begin{tikzpicture}
	\draw[->] (0,0) -- (3,0) node[right] {$\mathcal{L}_{1}$};
	\draw[->] (0,0) -- (0,3) node[above] {$\mathcal{L}_{2}$};
\end{tikzpicture}
\hspace{1cm}
\begin{tikzpicture}
%\node[quadri] (E) at(0,4) {};
\node[quadri2] (A) at(-2,4) {};
%\node[quadri] (Z) at(-4,4) {};
\node[quadri2] (W) at(-4,1){};
%\node[quadri] (P) at(-2,1){};
\node[quadri2] (O) at(0,1){};
%\node[quadri2] (H) at(-2,3.2){};
%\node[quadri2] (K) at(-1.1,1.8){};
%\node[quadri2] (Y) at(-2.7,1.8){};
%\node[quadri] (X) at(0,2.5){};
%\node[quadri] (I) at(-2,2){};
%\node[quadri] (J) at(-4,2.5){};
%\draw[-,=latex] (E)--(Z);
%\draw[-,=latex] (Z)--(W);
\draw[-,=latex] (W)--(O);
\draw[-,=latex] (O)--(A);
\draw[-,=latex] (W)--(A);
\end{tikzpicture}
\end{center}
\caption{}\text{$\mathrm{P}_{1}-Bubble/\mathrm{P}_{1}$ element  $\left(\begin{tikzpicture}\node[quadri] (P) at (0,0)  {};\end{tikzpicture}\hspace{0.1cm}\text{ velocity node}; \begin{tikzpicture}\node[quadri2] (Q) at (0,0) {};\end{tikzpicture}\hspace{0.1cm} \text{pressure node} \right),\hspace{0.1cm}\text{local co-ordinate}  \left(\mathcal{L}_{1},\mathcal{L}_{2}\right)$.}
\label{nodal}
\end{figure}
then we obtain the  nonsingular saddle-point problem $(\ref{saddle})$, for more details, we refer to see~\cite{koko}. 
\end{exe}
In Tables $\ref{tab:v=1,exp3}$, $\ref{tab:v=0.1,exp3}$ and $\ref{tab:v=0.01,exp3}$ we report the total  required number of outer $\mathcal{R}$GMRES iterations, elapsed $\mathrm{CPU}$ time (in seconds),  residual  and relative residual
under "$\mathrm{IT}$" , "$\mathrm{CPU}$", "$\mathrm{RES}$" and 
"$\mathrm{RRES}$" with respect to different values $\nu$. In the following numerical results we will increase Reynolds numbers by varying $\nu$, specifically for
$(\nu = 0.001, \nu =0.01\hspace{0.1cm} \text{and}\hspace{0.1cm}\nu=0.1\hspace{0.1cm})$.\\
In the case $\nu =0.001.$\\
\begin{table}[H]
\centering
\caption{{Numerical results of  the $\mathcal{P}\mathcal{R}$GMRES methods.}}
		\begin{tabular}{ |p{1.4cm}||p{1.4cm}||p{1.8cm}||p{1.8cm}|}
			\hline
			$\alpha$ & & $\mathcal{P}_{r}\left(\beta^{*}\right)$ &  $\mathcal{P}_{Gr}\left(\beta^{*}\right)$\\
			\hline
			$1e+02$ &IT CPU RES RRES&  81\hspace{1.6cm} 0.74\hspace{1.6cm} 1.00e-02 \hspace{1.5cm} 1.50e-06&  56\hspace{1.6cm}
0.33\hspace{1.6cm} 2.00e-03\hspace{0.2cm}    3.99e-07
			\\
			\hline 
			$1e+03$&IT CPU RES RRES  & 111\hspace{1.8cm} 0.50\hspace{1.8cm}5.06e-02\hspace{0.8cm}7.10e-07
 &66 \hspace{1.8cm}0.38\hspace{0.8cm}   5.5e-03\hspace{1.8cm}   7.78e-08
			\\
			\hline
	$1e+04$&IT CPU RES RRES  & 141\hspace{1.8cm} 0.70\hspace{0.8cm}4.15e-01\hspace{0.8cm} 2.77e-06
 &69 \hspace{2.9cm}0.40\hspace{1.8cm} 3.00e-03\hspace{0.8cm}   4.29e-08
\\
\hline
		\end{tabular}
\label{tab:v=1,exp3}
%\caption{{Numerical results of  the $\mathcal{P}$RGMRES methods.}}
\end{table}\vspace{0.1cm}
In the case $\nu =0.01.$
\begin{table}[H]
\centering
\caption{{Numerical results of  the $\mathcal{P}\mathcal{R}$GMRES methods.}}
		\begin{tabular}{ |p{1.4cm}||p{1.4cm}||p{1.8cm}||p{1.8cm}|}
			\hline
			$\alpha$ & & $\mathcal{P}_{r}\left(\beta^{*}\right)$ &  $\mathcal{P}_{Gr}\left(\beta^{*}\right)$\\
			\hline
			$1e+02$ &IT CPU RES RRES&  116\hspace{1.6cm} 1.19\hspace{1.6cm} 1.77e-05\hspace{1.5cm} 2.47e-09&  62\hspace{1.6cm}
0.73\hspace{1.6cm}  4.08e-07\hspace{0.2cm}  5.71e-11   
			\\
			\hline 
			$1e+03$&IT CPU RES RRES  & 110\hspace{1.8cm} 0.84\hspace{1.8cm} 
       5.3e-03\hspace{0.8cm}7.50e-08
 &67 \hspace{1.8cm}0.66\hspace{0.8cm}    1.88e-05\hspace{1.8cm}   2.64e-10
			\\
			\hline
	$1e+04$&IT CPU RES RRES  & 147\hspace{1.8cm} 1.45\hspace{0.8cm} 3.8e-02 \hspace{0.8cm}  5.37e-08
 &80\hspace{2.9cm}1.09\hspace{1.8cm} 3.02e-04\hspace{0.8cm}   4.24e-10
\\
\hline
		\end{tabular}
\label{tab:v=0.1,exp3}
%\caption{{Numerical results of  the $\mathcal{P}$RGMRES methods.}}
\end{table}
In the case $\nu =0.1.$\\
\begin{table}[H]
\centering
\caption{{Numerical results of  the $\mathcal{P}\mathcal{R}$GMRES methods.}}
		\begin{tabular}{ |p{1.4cm}||p{1.4cm}||p{1.8cm}||p{1.8cm}|}
			\hline
			$\alpha$ & & $\mathcal{P}_{r}\left(\beta^{*}\right)$ &  $\mathcal{P}_{Gr}\left(\beta^{*}\right)$\\
			\hline
			$1e+02$ &IT CPU RES RRES&  123\hspace{1.6cm} 1.07\hspace{1.6cm}1.72e-05
\hspace{1.5cm} 2.41e-09&  67\hspace{1.6cm}
0.78\hspace{1.6cm}  2.17e-06\hspace{0.2cm}  3.04e-10 
			\\
			\hline 
			$1e+03$&IT CPU RES RRES  & 134\hspace{1.8cm} 1.33\hspace{1.8cm} 
       4.6e-03\hspace{0.8cm} 6.44e-08
 &83\hspace{1.8cm}0.98\hspace{0.8cm}    
   2.44e-05\hspace{1.8cm}  3.43e-10
			\\
			\hline
	$1e+04$&IT CPU RES RRES  & 162\hspace{1.8cm} 1.69\hspace{0.8cm} 1.30e-02 \hspace{0.8cm}  1.83e-08
 &99\hspace{2.9cm}1.39\hspace{1.8cm}  3.54e-05\hspace{0.8cm}   4.98e-11
\\ 
\hline
		\end{tabular}
\label{tab:v=0.01,exp3}
%\caption{{Numerical results of  the $\mathcal{P}$RGMRES methods.}}
\end{table}
According to Tables~\ref{tab:v=1,exp3}, \ref{tab:v=0.1,exp3} and \ref{tab:v=0.01,exp3}, it can be observed that the global regularized preconditioner $\mathcal{P}_{Gr}$
 requires lowest $\mathrm{IT}$ and CPU time, which implies that the  $\mathcal{P}_{Gr}$GMRES method is superior to the  $\mathcal{P}_{r}$GMRES
 method in terms of computing efficiency. Using Algorithm $2$, which employs the $\mathcal{P}$GCG method to solve linear systems with multiple right-hand sides $(\ref{PSMR})$, lead to much better numerical results compared to solve it independently as described in  Algorithm \ref{algor}. This is evident as the  $\mathcal{P}_{Gr}\mathcal{R}$GMRES requires less CPU time for different values of parameter $\alpha$. The   $\mathcal{P}_{r}$ preconditioner  performs less effectively, particularly when the  parameter $\alpha$ increase.\\
 We list the numerical results of the  $\mathcal{F}$GMRES methods with  various values of $\nu$ in Tables $4$, $5$ and $6$.\\
In the case $\nu =0.001.$\\
\begin{table}[H]
\centering
\caption{{Numerical results of  the $\mathcal{F}$GMRES methods.}}
		\begin{tabular}{ |p{1.4cm}||p{1.0cm}|||p{1.8cm}||p{1.8cm}|}
			\hline
			$\alpha$ & & $\mathcal{P}_{r}\left(\beta^{*}\right)$ &  $\mathcal{P}_{Gr}\left(\beta^{*}\right)$\\
			\hline
			$1e+02$ &IT CPU RES RRES& 
			91\hspace{1.6cm} 1.03\hspace{1.6cm} 6.00e-03 \hspace{1.5cm}   8.41e-07&  55\hspace{1.6cm}
0.78\hspace{1.6cm} 6.00e-03\hspace{0.2cm}  9.91e-07
			\\
			\hline 
			$1e+03$&IT CPU RES RRES   & 77\hspace{1.8cm} 0.75 \hspace{1.8cm} 6.63e-02\hspace{0.8cm}9.30e-07
 & 49 \hspace{1.8cm}0.49\hspace{0.8cm} 6.70e-02\hspace{1.8cm}    9.41e-07
			\\
			\hline
	$1e+04$&IT CPU RES RRES   & 70\hspace{1.8cm}0.67\hspace{1.8cm}  6.6e-01\hspace{0.8cm} 9.26e-07 
 &43 \hspace{2.9cm}0.54\hspace{1.8cm}  6.6e-01\hspace{0.8cm} 8.57e-07
\\
\hline
		\end{tabular}
\label{tab:v=1,exp4}
%\caption{{Numerical results of  the $\mathcal{P}$RGMRES methods.}}
\end{table}

In the case $\nu =0.01.$\\
\begin{table}[H]
\centering
\caption{{Numerical results of  the $\mathcal{F}$GMRES methods.}}
		\begin{tabular}{ |p{1.4cm}||p{1.0cm}|||p{1.8cm}||p{1.8cm}|}
			\hline
			$\alpha$ & & $\mathcal{P}_{r}\left(\beta^{*}\right)$ &  $\mathcal{P}_{Gr}\left(\beta^{*}\right)$\\
			\hline
			$1e+02$ &IT CPU RES RRES& 
			116\hspace{1.6cm} 0.92\hspace{1.6cm} 5.83e-06 \hspace{1.5cm}   8.15e-10&  65\hspace{1.6cm}
0.65\hspace{1.6cm} 6.24e-06\hspace{0.2cm}  8.72e-10
			\\
			\hline 
			$1e+03$&IT CPU RES RRES   &131 \hspace{1.8cm}1.07\hspace{0.8cm} 6.10e-04\hspace{1.8cm}    8.58e-10
 & 3 \hspace{1.8cm}0.11\hspace{0.8cm} 1.96e-04 \hspace{1.8cm}    2.89e-10
			\\
			\hline
	$1e+04$&IT CPU RES RRES   & 137 \hspace{1.8cm}1.10\hspace{0.8cm} 6.10e-04\hspace{1.8cm}    8.58e-10
 & 84 \hspace{1.8cm}0.86\hspace{0.8cm} 5.81e-04\hspace{1.8cm}    8.16e-10
\\
\hline
		\end{tabular}
\label{tab:v=1,exp4}
%\caption{{Numerical results of  the $\mathcal{P}$RGMRES methods.}}
\end{table}

In the case $\nu =0.1.$\\
\begin{table}[H]
\centering
\caption{{Numerical results of  the $\mathcal{F}$GMRES methods.}}
		\begin{tabular}{ |p{1.4cm}||p{1.0cm}|||p{1.8cm}||p{1.8cm}|}
			\hline
			$\alpha$ & & $\mathcal{P}_{r}\left(\beta^{*}\right)$ &  $\mathcal{P}_{Gr}\left(\beta^{*}\right)$\\
			\hline
			$1e+02$ &IT CPU RES RRES& 
			102\hspace{1.6cm} 0.79\hspace{1.6cm} 6.58e-06 \hspace{1.5cm}   9.19e-10&  59\hspace{1.6cm}
0.64\hspace{1.6cm} 5.60e-06\hspace{0.2cm}  7.83e-10
			\\
			\hline 
			$1e+03$&IT CPU RES RRES   & 105\hspace{1.8cm} 0.77 \hspace{1.8cm} 5.85e-05\hspace{0.8cm}8.22e-10
 & 63\hspace{1.8cm}0.63\hspace{0.8cm} 5.73e-05\hspace{1.8cm}    8.05e-10
			\\
			\hline
	$1e+04$&IT CPU RES RRES   & 126\hspace{2.9cm}1.03\hspace{1.8cm}  5.50e-04\hspace{0.8cm} 7.73e-10
 &72\hspace{2.9cm}0.78\hspace{1.8cm}  7.05e-04\hspace{0.8cm} 9.90e-10
\\
\hline
		\end{tabular}
\label{tab:v=1,exp4}
%\caption{{Numerical results of  the $\mathcal{P}$RGMRES methods.}}
\end{table}

From these experiments, it is evident that  our $\mathcal{P}_{Gr}\mathcal{F}$GMRES, in all trials, it necessitates a smaller amount of $\mathrm{CPU}$ time compared to other $\mathcal{F}$GMRES methods.
Tables $4$, $5$ and $6$, 
 demonstrate that using the $\mathcal{P}$GCG method to solve the algebraic linear system with multiple right-hand sides (\ref{PSMR}) as described in Algorithm $2$, enhances the convergence rate of  $\mathcal{F}$GMRES  method. The comparative analysis of simulation results in Tables  $4$, $5$ and $6$ indicates a substantial improvement in the performance of $\mathcal{F}$GMRES, considering CPU times, RES and RRES. This improvement is particularly notable when utilizing the  $\mathcal{P}_{Gr}$ preconditioner compared to the  $\mathcal{P}_{r}$ preconditioner. The diagonal $\mathcal{P}_{D}$ \cite{badahmane}, triangular $\mathcal{P}_{T}$ \cite{badahmane}, $\mathcal{P}_{RHSS}$ \cite{RHSS} and $\mathcal{P}_{LSS}$ \cite{LSS} preconditioners exhibits comparatively poorer performance.
\section{Conclusion}\label{conclusion}
In this paper, we have introduced  a  new  global regularized preconditioner $\mathcal{P}_{Gr}$ as described in (\ref{algor2}) for  saddle-point problem~(\ref{saddle}). This preconditioner improves the convergence rate of $\mathcal{R}$GMRES and $\mathcal{F}$GMRES  methods. As a result, the system of the generalized Stokes equations exhibits significant stiffness. The stiffness indicates that the saddle-point matrix has a large condition number, i.e. is ill-conditioned. In this situation, the $\mathcal{P}\mathcal{R}$GMRES method based on  the preconditioner $\mathcal{P}_{r}$ require an important $\mathrm{CPU}$ time. With  $\mathcal{P}_{Gr}$ preconditioner, the $\mathcal{P}\mathcal{R}$GMRES and  $\mathcal{F}$GMRES   methods converges very fast. Numerical experiments show that  the $\mathcal{P}_{Gr}\mathcal{R}$GMRES  method  with  suitable  parameter $\beta$, 
exhibits significant superiority over $\mathcal{P}_{r}\mathcal{R}$GMRES method  in terms of the  $\mathrm{CPU}$ time, and illustrate that the    $\mathcal{P}_{Gr}\mathcal{R}$GMRES and  $\mathcal{P}_{Gr}\mathcal{F}$GMRES  methods are a very efficient methods for solving the saddle-point problem. Future work will focus on the deterministic/Stokes  that is enabled in~\cite{koko} code when the simulated system is too large.
\bibliographystyle{elsarticle-num-names}

\end{document}